\newtheorem{theorem}{Theorem}[section]
\newtheorem{proposition}[theorem]{Proposition}
\newtheorem{definition}[theorem]{Definition}
\newtheorem{example}[theorem]{Example}
\newtheorem{remark}[theorem]{Remark}
\numberwithin{equation}{section}
\newcommand{\arrow}{{\longrightarrow}}
\newcommand\ib{\mathbb{Z}}
\newcommand{\ci}{\mathcal{I}}
\newcommand{\sbs}{\subseteq}
\def\im{{\rm im}}
\def\en{{\rm End}}
\def\id{{\rm id}}
\newcommand{\la}{\lambda}
\date{}
\title{\bf Strongly Hopfian and Co-Hopfian Acts over Monoids: Structure and Characterizations}
\author{{\bf Ali Madanshekaf} \and {\bf Farideh Farsad} \\
Department of Mathematics\\Faculty of Mathematics, Statistics and Computer Science\\
Semnan University\\ P. O. Box 35131-19111\\
Semnan\\
Iran\\ email: amadanshekaf@semnan.ac.ir\\
}
\date{}
\begin{document}
\maketitle
\begin{abstract}
In this paper, we introduce and explore new classes of $S$-acts over a monoid $S$, namely, strongly Hopfian and strongly co-Hopfian acts, as well as their weaker counterparts, Hopfian and co-Hopfian acts. We investigate the relationships between these newly defined structures and well-studied classes of $S$-acts, including Noetherian, Artinian, injective, projective, quasi-injective, and quasi-projective acts. A key result shows that, under certain conditions, a quasi-projective (respectively quasi-injective) $S$-act that is strongly co-Hopfian (respectively strongly Hopfian) is also strongly Hopfian (respectively strongly co-Hopfian). Moreover, we provide a variety of examples and structural results concerning the behavior of subacts and quotient acts of strongly Hopfian and strongly co-Hopfian $S$-acts, further elucidating the internal structure and interrelationships within this extended framework.
\end{abstract}
AMS {\it subject classification}: 20M30, 20M50. \\
{\it Keywords}: $S$-act, strongly Hopfian, strongly co-Hopfian,  quasi-projective, quasi-injective.
\section{Introduction and Preliminaries}
The study of modules (rings) by properties of their endomorphisms is a classical research subject. In 1986, Hiremath~\cite{Hir} introduced the concepts of Hopfian modules and rings. Later, the dual concepts co-Hopfian modules and rings were given. An $R$-module  is said to be Hopfian (respectively co-Hopfian) if any surjective (respectively injective) $R$-endomomorphism is automatically an isomorphism. The terms ``Hopfian" and ``co-Hopfian" have arisen since the 1960s, and are said to be in honor of Heinz Hopf, who asked if all finitely generated groups were Hopfian and his use of the concept of the Hopfian group in his work on fundamental groups of
surfaces. Hopfian and co-Hopfian modules (rings) have been investigated by several authors. \\
Likewise, the study of $S$-acts by properties of their endomorphisms is a classical research subject (see for example~\cite{B, FK, KM, KN}). For instance, the well-known result that every surjective (injective) endomorphism in a Noetherian (Artinian) $S$-act is an automorphism, motivates and leads partly, the introduction and investigation of Hopfian, co-Hopfian $S$-acts (see also~\cite{AM,KK}).

In view of the connection between $S$-acts and modules, in this paper, we introduce strongly Hopfian
(respectively strongly co-Hopfian) $S$-acts and study some properties of them using the techniques similar to those used in modules. More precisely,  we start in section 2 from the following facts: the proof that every Noetherian (respectively Artinian) $S$-act $A$, is Hopfian (respectively co-Hopfian), involves only stationary chains of the type $\mathcal{K}_f\subseteq\mathcal{K}_{f^2}\subseteq\cdots$ (respectively
$\mathcal{I}_{f}\supseteq\mathcal{I}_{f^2}\supseteq\cdots )$ where $f$ is any endomorphism of $A$. There exist $S$-acts that satisfy these conditions but they are neither Artinian nor Noetherian. Furthermore, there exist
Hopfian and co-Hopfian $S$-acts that do not satisfy the previous conditions. In section 3, we introduce the class of strongly Hopfian (respectively strongly co-Hopfian) $S$-acts, as the $S$-act $A$ satisfying that the chain
$\mathcal{K}_f\subseteq \mathcal{K}_{f^2}\subseteq\cdots$ (respectively $\mathcal{I}_{f}\supseteq\mathcal{I}_{f^2}\supseteq\cdots)$ is stationary for every $f\in \en_S (A)$. It is immediately clear that  strongly Hopfian implies Hopfian and strongly co-Hopfian implies co-Hopfian. In  Example \ref{not st. Hopfian} below we will present some Hopfian $S$-act that are not strongly Hopfian. It is shown that if $B$ is fully invariant subact of an $S$-act $A$ and $B, A/B$ are strongly Hopfian then so is $A$ (see Theorem~\ref{fully invariant}). 
In the last two sections we pay attention to interaction between strongly Hopfian and strongly co-Hopfian objects with quasi-projective and quasi-injective objects (see Theorems~\ref{quasi-in+st Hopf} and \ref{quasi-proj+st co-Hopf}). Also, some examples and results that are connected with the subacts and quotients of strongly Hopfian and strongly co-Hopfian $S$-acts are presented. 
	A preliminary version of this work was presented at [The 1st international conference of educational management of Iran, 2017] by the second author. In this paper, we provide a full and extended version with additional results, detailed proofs, and broader discussion.

Now we give some notation and terminology. Throughout this paper, $S$ always stands for a monoid, and $\mathbb{N}$ for the set of natural numbers. 
A non-empty set $A$ is called a right {\it $S$-act} if there is a map $\mu: A\times S\to A$, called its action and denoted by $(a, s)\mapsto as$, satisfying  $a1 = a$ and $a(st)=(as)t$, for all $a\in A$, and $s, t\in S$. The notion of a left $S$-act is defined dually. To simplify, by an $S$-act we mean a right $S$-act. The category of all $S$-acts, with $S$-homomorphisms  (or $S$-act maps) (i.e., $f : A\to B$ with $f(as) = f(a)s$, for $s\in S, a\in A$)
is denoted by \textbf{Act}-$S$. Clearly $S$ itself is an $S$-act with its operation as the action. Let $B\subseteq A$ be a non-empty subset of $A$. Then $B$ is called a subact of $A$ if $bs\in B$ for all $s\in S$ and $b\in B$. 

A  congruence on an $S$-act $A$ is an equivalence relation $\theta$ with the property
that if $a\mathbin{\theta} a^{\prime }$ for $a, a^{\prime }\in A,$ then $
as\mathbin{\theta} a^{\prime }s$, for all $s\in S$. Then the set of equivalence classes 
$A/\theta$ can be made into an $S$-act in such a way that the natural map
\begin{displaymath}
	\pi_{\theta}: A \longrightarrow A/\theta; \quad a \mapsto [a]_{\theta}
\end{displaymath} 
is an $S$-act map. It is called the canonical epimorphism.  As two examples of the situation that we encounter with congruence,  we have the following:\\
Let $A$ be a right $S$-act.

(1) Any subact $B\subseteq A$ defines the Rees congruence $\rho_B$ on $A$, by setting $a\mathbin{\rho_{B}  } a^{\prime }$ iff $a, a^{\prime }\in
B$ or $a= a^{\prime }$. We denote the resulting factor act by $A/B$ and call it the Rees factor act of $A$ by the subact $B$.

(2) Let $f$ be an endomorphism of $A$. We define two relations on $A$ by
$$\mathcal{K}_f=\{(a_1, a_2)\in A\times A \mid f(a_1)=f(a_2)\},\quad\mathcal{I}_{f}=(\im f\times \im f)\cup \Delta_{A}$$
where $\Delta_{A}$ is the diagonal equivalence relation on $A$ and $\im f$ is the image of the map $f$. It is clear
that both $\mathcal{K}_f$ and $\ci_{f}$ are congruences on $A$. 
For basic definitions and terminology relating semigroups and acts over monoids, we refer the reader to~\cite{H,KKM,MT}. 
\section{Hopfian and co-Hopfian $S$-acts}
We recall the following definitions and facts which we deal with in this paper.
\begin{definition}{\rm 
		We call an $S$-act  {\it Noetherian} when its congruences satisfy the ascending chain condition. Also, we call an $S$-act  {\it Artinian} when its congruences 	satisfy the descending chain condition.}
\end{definition}
We refer the reader to~\cite{NP,G1,G2}, for basic results and terminology about the above notions.
\begin{definition}
	{\rm  (See~\cite{AM}
		 also.)\\ An $S$-act $A$ is called {\it Hopfian} if every surjective endomorphism
		of $A$ is an automorphism, or equivalently, it is not isomorphic to any proper quotient of itself. An $S$-act $A$ is called {\it co-Hopfian} if every injective endomorphism of $A$ is an automorphism, or, equivalently, if it is
		not isomorphic to any proper subact of itself.}
\end{definition}
\begin{example}
	\begin{enumerate}{\rm
			\item[(a)] Every finite $S$-act is Hopfian and co-Hopfian.
			\item[(b)] Let $A$ be an arbitrary $S$-act. Then infinite product $P = A\times A\times\cdots$ is not Hopfian
			because the surjective $S$-map 	$\varphi: P\to P$ given by
			$$(a_1,a_2,\cdots )\mapsto (a_2,a_3,\cdots )$$ 
			that is not injective so that it is not an automorphism.
			\item[(c)] Recall that an algebra  is \emph{simple} if it has no non-trivial congeuences (e.g., see \cite{BS}). Then 	every simple $S$-act $A$ with $|A|>1$  is Hopfian. In fact, let $f$ be a surjective endomorphism of $A$. If $\mathcal{K}_f = \Delta_A$, then we have done. Otherwise, $\mathcal{K}_f = A\times A$ that shows $f$ is a fixed map which is a contradiction with cardinality of $A$. 
		}
	\end{enumerate}
\end{example}
\begin{proposition}
	Every Noetherian $S$-act is Hopfian.
\end{proposition}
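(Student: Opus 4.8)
The plan is to take a surjective endomorphism $f \in \en_S(A)$ and show it is injective, using the ascending chain condition on congruences. First I would form the chain of congruences $\mathcal{K}_f \sbs \mathcal{K}_{f^2} \sbs \mathcal{K}_{f^3} \sbs \cdots$, where $\mathcal{K}_{f^n} = \{(a_1,a_2) \in A\times A \mid f^n(a_1) = f^n(a_2)\}$. Each $\mathcal{K}_{f^n}$ is a congruence on $A$ (this is the special case of the observation in the preliminaries that $\mathcal{K}_g$ is a congruence, applied to $g = f^n$), and the inclusion $\mathcal{K}_{f^n} \sbs \mathcal{K}_{f^{n+1}}$ is immediate since $f^n(a_1)=f^n(a_2)$ forces $f^{n+1}(a_1) = f(f^n(a_1)) = f(f^n(a_2)) = f^{n+1}(a_2)$.

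Next, since $A$ is Noetherian its congruences satisfy the ascending chain condition, so there is some $n \in \mathbb{N}$ with $\mathcal{K}_{f^n} = \mathcal{K}_{f^{n+1}}$. Now I would exploit surjectivity of $f$: since $f$ is onto, so is $f^n$. Suppose $f(a_1) = f(a_2)$ for some $a_1, a_2 \in A$; pick $b_1, b_2 \in A$ with $f^n(b_1) = a_1$ and $f^n(b_2) = a_2$. Then $f^{n+1}(b_1) = f(a_1) = f(a_2) = f^{n+1}(b_2)$, so $(b_1,b_2) \in \mathcal{K}_{f^{n+1}} = \mathcal{K}_{f^n}$, whence $a_1 = f^n(b_1) = f^n(b_2) = a_2$. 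Thus $f$ is injective, and being already surjective it is an automorphism, so $A$ is Hopfian.

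There is essentially no hard part here; the only thing to be careful about is the direction of the chain (it is ascending because larger powers of $f$ identify more points) and the use of surjectivity to pull back the witnesses $a_1, a_2$ through $f^n$. An alternative phrasing would invoke the isomorphism $A/\mathcal{K}_f \cong \im f = A$ together with the fact that a Noetherian act cannot be isomorphic to a proper quotient of itself, but the chain argument above is the most direct and is the one the later sections build on.
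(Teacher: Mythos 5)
Your proof is correct and follows essentially the same argument as the paper: stabilize the chain $\mathcal{K}_f\subseteq\mathcal{K}_{f^2}\subseteq\cdots$ at some $n$, then for $f(a_1)=f(a_2)$ pull back through the surjection $f^n$ and use $\mathcal{K}_{f^{n+1}}=\mathcal{K}_{f^n}$ to conclude $a_1=a_2$. No differences worth noting.
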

\begin{proof}
	Let $A$ be a Noetherian $S$-act. For every surjective endomorphism
	$f$ of $A$, the ascending chain $\mathcal{K}_f\subseteq\mathcal{K}_{f^{2}}\subseteq\mathcal{K}_{f^{3}}\subseteq\cdots$ stabilizes, so there exists $n\in \mathbb N$ such that $\mathcal{K}_{f^{n}}=\mathcal{K}_{f^{n+1}}=\cdots$. We show that 	$\mathcal{K}_f=\Delta_{A}$.	For $(a,b)\in\mathcal{K}_f\subseteq A\times A$,	
	 since $f^n$ is surjective, there exists $(c , d)\in A\times A$ such that $f^n(c)=a$ and $f^n(d)=b$. In view of $f(a)=f(b)$, we get $(c , d)\in\mathcal{K}_{f^{n+1}}= \mathcal{K}_{f^{n}}$  so that $a=f^{n}(c)=f^{n}(d)=b.$ Hence $\mathcal{K}_f=\Delta_{A}$ which implies that $f$ is injective, as required.
\end{proof}
\begin{proposition}
	Every Artinian $S$-act is co-Hopfian.
\end{proposition}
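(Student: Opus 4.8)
The plan is to dualize the argument used in the proof that every Noetherian $S$-act is Hopfian, working with the descending chain of congruences $\ci_f \supseteq \ci_{f^2} \supseteq \cdots$ instead of the ascending chain of kernels. Let $A$ be an Artinian $S$-act and let $f \in \en_S(A)$ be an injective endomorphism. Since $A$ is Artinian, the descending chain $\ci_f \supseteq \ci_{f^2} \supseteq \ci_{f^3} \supseteq \cdots$ stabilizes, so there exists $n \in \mathbb{N}$ with $\ci_{f^n} = \ci_{f^{n+1}} = \cdots$. The goal is to deduce from this that $f$ is surjective, i.e.\ that $\im f = A$, which combined with injectivity gives that $f$ is an automorphism.

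The key step is to extract from $\ci_{f^n} = \ci_{f^{n+1}}$ the conclusion $\im f^n = \im f^{n+1}$, and then to leverage injectivity of $f$ to push this down to $\im f = A$. For the first part I would argue as follows: suppose $a \in \im f^n$ but $a \notin \im f^{n+1}$; pick any $b \in \im f^{n+1} \subseteq \im f^n$ with $b \neq a$ (one must be slightly careful if $|\im f^{n+1}| = 1$ or $\im f^{n+1} = \emptyset$, but since $f$ is injective and $A$ is non-empty the iterated images are non-empty, and a short separate check handles the degenerate case). Then $(a,b) \in \im f^n \times \im f^n \subseteq \ci_{f^n} = \ci_{f^{n+1}}$, so $(a,b) \in (\im f^{n+1} \times \im f^{n+1}) \cup \Delta_A$; since $a \neq b$ this forces $a \in \im f^{n+1}$, a contradiction. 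Hence $\im f^n = \im f^{n+1}$, i.e.\ $f^n(A) = f^{n+1}(A) = f^n(f(A))$.

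From $f^n(A) = f^n(f(A))$ together with the injectivity of $f$ (hence of $f^n$) I would then conclude $A = f(A)$: indeed, for any $a \in A$, $f^n(a) \in f^n(A) = f^n(f(A))$, so $f^n(a) = f^n(f(a'))$ for some $a' \in A$, and injectivity of $f^n$ gives $a = f(a')$, so $a \in \im f$. Thus $f$ is surjective, and being also injective it is an automorphism, so $A$ is co-Hopfian.

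The main obstacle I anticipate is the bookkeeping around trivial cases in the congruence $\ci_f = (\im f \times \im f) \cup \Delta_A$: when $\im f^{n+1}$ is a singleton the membership $(a,b) \in \ci_{f^{n+1}}$ does not immediately distinguish the $\im f^{n+1} \times \im f^{n+1}$ component from $\Delta_A$, so the clean ``pick $b \neq a$ in the image'' step needs adjustment. However, injectivity of $f$ forces every iterated image $\im f^k$ to have the same cardinality as $A$ itself, so either $|A| = 1$ (in which case $A$ is finite, hence co-Hopfian trivially) or every $\im f^k$ has at least two elements and the argument goes through verbatim. Packaging this remark first makes the rest of the proof routine.
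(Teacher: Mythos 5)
Your proof is correct and takes essentially the same route as the paper: Artinianness stabilizes the chain $\mathcal{I}_{f}\supseteq\mathcal{I}_{f^2}\supseteq\cdots$, and injectivity of $f^{n}$ is then used to pull elements back along $f$ and conclude surjectivity. The only difference is organizational — you first extract $\im f^{n}=\im f^{n+1}$ from $\mathcal{I}_{f^{n}}=\mathcal{I}_{f^{n+1}}$ and then apply injectivity, whereas the paper argues directly on pairs $(f^{n}(a),f^{n}(b))$ with $b\neq a$; your explicit handling of the degenerate (singleton) case is a small bonus the paper leaves implicit.
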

\begin{proof}
	Let $A$ be an Artinian $S$-act. For every injective endomorphism
	$f$ of $A$, the descending chain $\mathcal{I}_{f}\supseteq\mathcal{I}_{f^2}\supseteq\cdots$ stabilizes.
	So there exists $n\in \mathbb{N}$
	such that $\mathcal{I}_{f^n}=\mathcal{I}_{f^{n+1}}=\cdots $. Now for every $a\in A$, consider
	any $b\in A$ with $b\neq a$, we have $(f^{n}(a), f^{n}(b))\in
	\mathcal{I}_{f^n}=\mathcal{I}_{f^{n+1}}$, but $f^n$ is injective, this gives that
	$f^{n}(a)\neq f^{n}(b)$. Hence $(f^{n}(a),f^{n}(b))\in
	\im f^{n+1}\times \im f^{n+1}$, this means that there exists
	$(x,y)\in A\times A$ such that $f^{n+1}(x)=f^{n}(a)$ and
	$f^{n+1}(y)=f^{n}(b)$. Again by injectivity of $f^n$ we conclude
	that $f(x)=a$ and $f(y)=b$. Consequently, $a\in \im f$ which implies $f$ is surjective, as we needed.
\end{proof}
One might ask whether the converse of the above propositions are true. By providing the following example we answer the above question.
\begin{example}\label{Q}
	{\rm Consider the set of rational numbers $\mathbb{Q}$  as a right $(\mathbb{Z},\cdot )$-act with multiplication by integers as operation (see also~\cite[page 64]{KKM}). Then $\mathbb{Q}$ is a non-Noetherian and non-Artinian $\mathbb Z$-act which is Hopfian and co-Hopfian. To see this, if $f\in\en_\mathbb{Z} (\mathbb{Q})$, then $f(q) = qf(1)$ for any rational $q$. So all nonzero endomorphisms are automorphisms. This	gives that $\mathbb{Q}$ is both Hopfian and co-Hopfian $S$-act. The non-Noetherianity follows from $\rho_1 \subsetneq \rho_2 \subsetneq \rho_3 \subsetneq \cdots$ where $\rho_j$ is the Rees congruence given by $(1/2^j)\mathbb{Z}$, where, $j\in \mathbb{N}$ (see \cite{G1,G2} also).
					
		On the other hand, for every subact $B$ of $\mathbb{Q}$, consider the Rees congruence $\rho_B$ on $\mathbb{Q}$ and using the  fact that for two subacts $M$ and $N$ of any act with $M\subseteq N$ 	one has $\rho_M\subseteq \rho_N$. Then according to this fact we have the 	descending chain $\rho_{\mathbb {Z}}\supsetneq	\rho_{{2\mathbb{Z}}}\supsetneq \rho_{4\mathbb{Z}}\supsetneq\cdots$ 	that does not stabilize. Therefore $\mathbb{Q}_{\mathbb{Z}}$ is not Artinian.}
\end{example}
\section{Strongly Hopfian and Strongly co-Hopfian $S$-acts}
In realizing that every Noetherian (respectively Artinian) $S$-act $A$ is Hopfian (respectively co-Hopfian), the ACC (respectively DCC) is only applied to chain of congruences of the form
\[
(a)\quad\mathcal{K}_f\subseteq\mathcal{K}_{f^{2}}\subseteq\mathcal{K}_{f^{3}}\subseteq\cdots \quad (\textrm{respectively,} \  
(b)\quad\mathcal{I}_{f}\supseteq\mathcal{I}_{f^2}\supseteq\cdots )
\]
where $f$ is any endomorphism of $A$. As we have already seen, we give the following example:\\
Since the $\mathbb{Z}$-endomorphisms of $\mathbb{Q}$ are either zero or automorphisms, it follow that $\mathbb{Q}$ is a
non-Noetherian and non-Artinian $\mathbb{Z}$-act which satisfy both conditions $(a)$ and $(b)$.\\
The above remark motivates  the following definition.
\begin{definition}
	{\rm An $S$-act $A$ is called {\it strongly Hopfian} if for every endomorphism $f$ of $A$ the ascending chain $\mathcal{K}_f\subseteq
		\mathcal{K}_{f^2}\subseteq \mathcal{K}_{f^3}\subseteq\cdots $ stabilizes. That is, there exist a positive integer $n$ such that $\mathcal{K}_{f^m}=  \mathcal{K}_{f^n}$ for all $m\geq n.$ An $S$-act $A$ is called {\it strongly co-Hopfian} if for every
		endomorphism $f$ of $A$ the descending chain $\mathcal{I}_{f}\supseteq\mathcal{I}_{f^2}\supseteq\mathcal{I}_{f^3}\cdots$ stabilizes. That is, there exist a positive integer $n$ such that $\ci_{f^m}=  \ci_{f^n}$ for all $m\geq n.$}
\end{definition}
	It is immediately clear that  strongly Hopfian implies Hopfian and strongly co-Hopfian implies co-Hopfian. In  Example \ref{not st. Hopfian} below we will present some Hopfian $S$-acts that are not strongly Hopfian.	\\
Now we obtain:
\begin{proposition}\label{strongly Hopfian}
	For an $S$-act $A$ the following statements are equivalent:\\
	$(1)~A$ is strongly Hopfian.\\
	$(2)$ For every $f$ in $\en_{S}(A)$ there exists an integer $n\geq 1$ such that $\mathcal{K}_{f^n}=\mathcal{K}_{f^{n+1}}$.\\
	$(3)$ For every $f$ in $\en_{S}(A)$ there exist an integer $n\geq 1$ such that $\mathcal{I}_{f^n}\bigcap\mathcal{K}_{f^n}=\Delta_A$.
\end{proposition}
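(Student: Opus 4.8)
The plan is to prove the implications $(1)\Rightarrow(2)\Rightarrow(1)$ and $(1)\Rightarrow(3)\Rightarrow(2)$, which together give all the equivalences. Throughout I use the elementary fact, already built into the definition, that $\mathcal{K}_{f^m}\subseteq\mathcal{K}_{f^{m+1}}$ for every $m$: applying $f$ to $f^m(a)=f^m(b)$ yields $f^{m+1}(a)=f^{m+1}(b)$.

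The implication $(1)\Rightarrow(2)$ is immediate. For $(2)\Rightarrow(1)$ I would prove the standard ``one step forces stabilization'' fact: if $\mathcal{K}_{f^n}=\mathcal{K}_{f^{n+1}}$ then $\mathcal{K}_{f^m}=\mathcal{K}_{f^n}$ for all $m\geq n$. By induction it suffices to show that $\mathcal{K}_{f^k}=\mathcal{K}_{f^{k+1}}$ implies $\mathcal{K}_{f^{k+1}}=\mathcal{K}_{f^{k+2}}$. Given $(a,b)\in\mathcal{K}_{f^{k+2}}$, we have $f^{k+1}(f(a))=f^{k+1}(f(b))$, hence $(f(a),f(b))\in\mathcal{K}_{f^{k+1}}=\mathcal{K}_{f^k}$, hence $f^{k+1}(a)=f^k(f(a))=f^k(f(b))=f^{k+1}(b)$; together with the automatic reverse inclusion this gives the claim, and the chain stabilizes at $n$.

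For $(1)\Rightarrow(3)$, pick $n$ with the chain stationary from $n$ on, so in particular $\mathcal{K}_{f^n}=\mathcal{K}_{f^{2n}}$. Suppose $(a,b)\in\mathcal{I}_{f^n}\cap\mathcal{K}_{f^n}$ with $a\neq b$. Since $\mathcal{I}_{f^n}=(\im f^n\times\im f^n)\cup\Delta_A$ and $a\neq b$, we may write $a=f^n(c)$, $b=f^n(d)$; combining with $f^n(a)=f^n(b)$ gives $f^{2n}(c)=f^{2n}(d)$, i.e. $(c,d)\in\mathcal{K}_{f^{2n}}=\mathcal{K}_{f^n}$, so $a=f^n(c)=f^n(d)=b$, a contradiction. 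Hence $\mathcal{I}_{f^n}\cap\mathcal{K}_{f^n}=\Delta_A$. For $(3)\Rightarrow(2)$: given $n$ with $\mathcal{I}_{f^n}\cap\mathcal{K}_{f^n}=\Delta_A$, take $(a,b)\in\mathcal{K}_{f^{2n}}$. Then $(f^n(a),f^n(b))\in\mathcal{K}_{f^n}$ and also $f^n(a),f^n(b)\in\im f^n$, so $(f^n(a),f^n(b))\in\mathcal{I}_{f^n}\cap\mathcal{K}_{f^n}=\Delta_A$, whence $(a,b)\in\mathcal{K}_{f^n}$. Thus $\mathcal{K}_{f^{2n}}\subseteq\mathcal{K}_{f^n}$, so all the terms in $\mathcal{K}_{f^n}\subseteq\mathcal{K}_{f^{n+1}}\subseteq\cdots\subseteq\mathcal{K}_{f^{2n}}$ coincide, and in particular $\mathcal{K}_{f^n}=\mathcal{K}_{f^{n+1}}$.

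None of the steps is genuinely difficult; the only points requiring care are the exponent bookkeeping (using $2n$ rather than $n+1$ in the arguments involving $\mathcal{I}_{f^n}$, so that an element of $\im f^n$ can be ``pulled back'' one full level) and remembering that $\mathcal{I}_{f^n}$ contains the diagonal, so passing from $(a,b)\in\mathcal{I}_{f^n}$ to $a,b\in\im f^n$ is only legitimate after the trivial case $a=b$ is set aside.
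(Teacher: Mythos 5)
Your proof is correct and follows essentially the same elementary chain-stabilization arguments as the paper: $(1)\Leftrightarrow(2)$ plus the two implications tying $(3)$ to the kernel-congruence chain, with only cosmetic differences in exponent bookkeeping (you pass through $\mathcal{K}_{f^{2n}}$ where the paper works with $\mathcal{K}_{f^{n+1}}$, and you prove $(1)\Rightarrow(3)$ rather than $(2)\Rightarrow(3)$). One point in your favour: your explicit induction for $(2)\Rightarrow(1)$ (one-step equality $\mathcal{K}_{f^{n}}=\mathcal{K}_{f^{n+1}}$ propagates to all higher powers) is cleaner than the paper's corresponding step, which argues via the images $\im (f^{n})$, and this stabilization fact is in any case what the paper silently uses when it invokes $\mathcal{K}_{f^{2n}}=\mathcal{K}_{f^{n}}$ in its proof of $(2)\Rightarrow(3)$.
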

\begin{proof}
	(1)$\Rightarrow$(2) This is always true by definition of strongly Hopfian $S$-acts.\\
	(2)$\Rightarrow$(1) Suppose that $f\in\en_{S}(A)$. If, for some $n$,
		$\im(f^n)$ consists of one point, we are done. In the complementary case,  we have $\im(f^n) = \im(f^{n+1})$ for some positive integer $n$. Apply $f$ to get the equality $\im(f^{n+1}) = \im(f^{n+2})$ and so on.
	Therefore, statements
	(1) and (2) are equivalent.\\
	To complete the proof it is enough to show that the remaining implications.\\
	(2) $\Rightarrow$ (3) 
	Suppose that $f\in\en_{S}(A)$ and choosing an integer $n\geq 1$ such that $\mathcal{K}_{f^n}=\mathcal{K}_{f^{n+1}}$.
	Let $(a,b)\in\mathcal{I}_{f^n}\bigcap\mathcal{K}_{f^n}.$ Then by $(a,b)\in \mathcal{I}_{f^n}$ we have either $(a,b)\in
	\Delta_A$ or $(a,b)\in \im f^{n}\times \im f^{n}$, it follow that there exists $(x,y)\in A\times A$ such that $a=f^{n}(x)$ and
	$b=f^{n}(y)$. Now by $(a,b)\in\mathcal{K}_{f^n}$ we get $f^n(a)=f^{2n}(x)=f^{2n}(y)=f^n(b)$. By assumption $\mathcal{K}_{f^{2n}}=\mathcal{K}_{f^{n}}$ 	and so $a=f^{n}(x)=f^{n}(y)=b$, that is, $(a,b)\in \Delta_A$.\\
	(3) $\Rightarrow$ (2): Suppose that $f\in\en_{S}(A)$ and choosing an integer $n\geq 1$  that satisfies (3).
		For every $(a,b)\in\mathcal{K}_{f^{n+1}}$, we have $(f^{n}(a),f^{n}(b))\in\mathcal{K}_f\subseteq\mathcal{K}_{f^{n}}$. Therefore $((f^{n}(a),f^{n}(b))\in \mathcal{I}_{f^n}\bigcap\mathcal{K}_{f^n}=\Delta_A$ and
	so $(a,b)\in\mathcal{K}_{f^{n}}$. This shows that $\mathcal{K}_{f^{n+1}}\sbs \mathcal{K}_{f^n}$. Hence, 
			(2) follows.
	\end{proof}
In the following result, we present a slight extension of Proposition 3.3 in \cite{FS}.
\begin{proposition}\label{strongly co-Hopfian}
	For an $S$-act $A$ the following statements are equivalent:\\
	$(1)~A$ is strongly co-Hopfian.\\
	$(2)$ For every $f$ in $\en_{S}(A)$ there exists an integer $n\geq 1$ such that $\mathcal{I}_{f^n}=\mathcal{I}_{f^{n+1}}$.\\
	$(3)$  For every $f$ in $\en_{S}(A)$ there exists an integer $n\geq 1$ such that $\mathcal{I}_{f^n}\vee\mathcal{K}_{f^n}=A\times A$.
\end{proposition}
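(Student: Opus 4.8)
The plan is to follow the pattern of the proof of Proposition~\ref{strongly Hopfian}, now with joins of congruences playing the role that intersections played there; this also sharpens the module statement of \cite{FS} to the setting of \textbf{Act}-$S$. The implication $(1)\Rightarrow(2)$ is immediate from the definition, so the work is in $(2)\Rightarrow(1)$, $(2)\Rightarrow(3)$ and $(3)\Rightarrow(2)$.

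The engine for the first two implications will be the observation that a single equality $\mathcal{I}_{f^n}=\mathcal{I}_{f^{n+1}}$ already forces $\im f^n=\im f^{2n}$. Indeed, if $\im f^n=\{c\}$ is a single point then $f^n(c)\in\im f^n$ gives $f^n(c)=c$, so $\im f^{2n}=f^n(\im f^n)=\{c\}=\im f^n$; and if $|\im f^n|\ge 2$, then for any two distinct $a,b\in\im f^n$ we have $(a,b)\in\mathcal{I}_{f^n}=\mathcal{I}_{f^{n+1}}$ with $a\neq b$, which forces $a,b\in\im f^{n+1}$, so $\im f^n\subseteq\im f^{n+1}$, hence $\im f^n=\im f^{n+1}$ and in particular $\im f^n=\im f^{2n}$. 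From $\im f^n=\im f^{2n}$, sandwiching along the descending chain $\im f^n\supseteq\im f^{n+1}\supseteq\cdots\supseteq\im f^{2n}$ gives $\im f^n=\im f^{n+1}$, and applying $f$ repeatedly yields $\im f^n=\im f^m$ for all $m\ge n$; since $\mathcal{I}_{f^k}=(\im f^k\times\im f^k)\cup\Delta_A$ depends only on $\im f^k$, this proves $(2)\Rightarrow(1)$.

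For $(2)\Rightarrow(3)$ I would use the same fact $\im f^n=f^n(\im f^n)$: for any $p\in A$ there is $u\in\im f^n$ with $f^n(u)=f^n(p)$, i.e.\ $(p,u)\in\mathcal{K}_{f^n}$. Given $p,q\in A$, pick such $u,v\in\im f^n$; since $(u,v)\in(\im f^n\times\im f^n)\cup\Delta_A=\mathcal{I}_{f^n}$, the zig-zag $(p,u)\in\mathcal{K}_{f^n}$, $(u,v)\in\mathcal{I}_{f^n}$, $(v,q)\in\mathcal{K}_{f^n}$ shows $(p,q)\in\mathcal{I}_{f^n}\vee\mathcal{K}_{f^n}$, so the join is $A\times A$.

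The implication $(3)\Rightarrow(2)$ is the heart of the matter and I expect it to be the main obstacle, since one must first convert the hypothesis $\mathcal{I}_{f^n}\vee\mathcal{K}_{f^n}=A\times A$ into something usable. Because the operations of an $S$-act are unary, the join of two congruences is simply the transitive closure of their union, so the hypothesis says that any two elements of $A$ are linked by a finite chain of $\mathcal{I}_{f^n}$- and $\mathcal{K}_{f^n}$-steps. Using that the $\mathcal{I}_{f^n}$-classes are just the single block $\im f^n$ together with the singletons $\{x\}$ for $x\notin\im f^n$, I would show that every $p\in A$ is in fact $\mathcal{K}_{f^n}$-related to some element of $\im f^n$: fix $a_0\in\im f^n$, take a chain $p=z_0,z_1,\dots,z_m=a_0$, and let $j$ be least with $z_j\in\im f^n$; for $i<j$ the pair $(z_i,z_{i+1})$ cannot lie in $\mathcal{I}_{f^n}$ with $z_i\neq z_{i+1}$ (such a pair forces $z_i\in\im f^n$), so $(z_i,z_{i+1})\in\mathcal{K}_{f^n}$, and transitivity of $\mathcal{K}_{f^n}$ gives $(p,z_j)\in\mathcal{K}_{f^n}$ with $z_j\in\im f^n$. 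Now for $a=f^n(x)\in\im f^n$, picking $u=f^n(w)\in\im f^n$ with $(x,u)\in\mathcal{K}_{f^n}$ gives $a=f^n(x)=f^n(u)=f^{2n}(w)\in\im f^{2n}$, so $\im f^n=\im f^{2n}$; as in the previous paragraph this collapses the chain $\im f^n\supseteq\cdots\supseteq\im f^{2n}$, yielding $\im f^n=\im f^{n+1}$ and hence $\mathcal{I}_{f^n}=\mathcal{I}_{f^{n+1}}$. The only genuinely non-routine points are the identification of the join with a transitive closure and the bookkeeping in the chain argument; everything else is a direct computation.
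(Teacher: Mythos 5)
Your proposal is correct, and three of the four implications ($(1)\Rightarrow(2)$, $(2)\Rightarrow(1)$, $(2)\Rightarrow(3)$) run essentially as in the paper: stabilization of the images $\im f^m$ (the paper shows $\im f^n=\im f^{n+1}$ implies $\im f^n=\im f^{n+2}$ and iterates, you route through $\im f^n=\im f^{2n}$ and sandwich -- same idea, and your explicit treatment of the one-point-image case is actually more careful than the paper, which simply asserts that $\mathcal{I}_{f^n}=\mathcal{I}_{f^{n+1}}$ forces $\im f^n=\im f^{n+1}$), and for $(2)\Rightarrow(3)$ the same three-step zig-zag $(a,u)\in\mathcal{K}_{f^n}$, $(u,v)\in\mathcal{I}_{f^n}$, $(v,b)\in\mathcal{K}_{f^n}$ that the paper builds via $f^n(a)=f^{2n}(c)$, $f^n(b)=f^{2n}(d)$. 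Where you genuinely diverge is $(3)\Rightarrow(2)$: the paper takes a pair $(x,y)\in\mathcal{I}_{f^n}$, writes $x=f^n(u)$, $y=f^n(v)$, decomposes $(u,v)$ by the chain description of the join (citing \cite[Proposition I.4.3]{KKM}, which is exactly your ``transitive closure of the union'' observation) and pushes the chain through $f^n$ via a four-case analysis, carried out only for a chain of length three ``without loss of generality''; you instead extract from the chain joining an arbitrary $p\in A$ to a fixed point of $\im f^n$ that every element is $\mathcal{K}_{f^n}$-related to an element of $\im f^n$, whence $\im f^n\subseteq\im f^{2n}$ and then $\mathcal{I}_{f^n}=\mathcal{I}_{f^{n+1}}$ by sandwiching. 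Your version buys a cleaner argument that handles chains of arbitrary length uniformly (thereby closing the gap the paper's ``WLOG'' glosses over), at the cost of the small structural lemma identifying the $\mathcal{I}_{f^n}$-classes; the paper's version stays closer to a direct verification of $\mathcal{I}_{f^n}\subseteq\mathcal{I}_{f^{n+1}}$. Both are valid.
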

\begin{proof}
	(1)$\Rightarrow$(2) The assertion is obvious.\\ 
	(2)$\Rightarrow$(1) Given $f\in \en_{S}(A)$  such that $\mathcal{I}_{f^n}=\mathcal{I}_{f^{n+1}}$ for some $n\in \mathbb N$, one has $\im f^{n}=\im f^{n+1}$. We claim that  
	$\mathcal{I}_{f^n}=\mathcal{I}_{f^m}$ for every $m\geq n.$  It is enough to show that
	$\im f^{n}=\im f^{n+2}$. Firstly, we have $\im f^{n+2}\subseteq
	\im f^{n+1}=\im f^{n}$. Secondly, consider $x\in \im f^{n} = \im f^{n+1}$ then  there
	exists $a\in A$ such that $f(f^{n}(a))=x$. Now, $f^{n}(a)\in
	\im f^{n}=\im f^{n+1}$ and so there exists $b\in A$ such that
	$f^n(a) = f^{n+1}(b)$. This means $x\in \im f^{n+2}$, as we required. We
	conclude for every $m\in \mathbb{N}$ that $m\geq n$,
	$\im f^{n}=\im f^m$,   so that $\mathcal{I}_{f^{n}}=\mathcal{I}_{f^{m}}$, i.e., $A$ is
	strongly co-Hopfian.\\
	(3)$\Rightarrow$(2) Let $n$ be a positive integer and $\mathcal{I}_{f^n}\vee\mathcal{K}_{f^n}=A\times A.$ We will show that $\mathcal{I}_{f^n}=\mathcal{I}_{f^{n+1}}$. Let  $(x,y)\in \mathcal{I}_{f^n}$. If $(x,y)\in \Delta_A$ then
	$(x,y)\in \mathcal{I}_{f^{n+1}}$. Otherwise, $(x,y)\in\im f^{n}\times \im f^{n}$ then one has $x = f^n(u)$ and $y = f^n(v)$ 
	for some $u$ and $v$ in $A$. By the assumption $(u , v)\in\mathcal{I}_{f^n}\vee\mathcal{K}_{f^n}$, therefore by \cite[Proposition I.4.3]{KKM} there exist elements $b_1, b_2, \cdots , b_{n-1}\in A$ such that $(u , b_1)\in \tau_1, (b_1, b_2)\in \tau_2, \cdots , (b_{n-1} , v)\in \tau_n$ where $\tau_i\in \{\mathcal{I}_{f^n} , \mathcal{K}_{f^n} \}$ for $i=1, \cdots , n.$ 
	Without loss of generality we may assume that $n=2$ and there exist $b_1, b_2\in A$ for which one has $(u , b_1)\in\mathcal{I}_{f^n}, (b_1, b_2)\in\mathcal{K}_{f^n}, (b_2 , v)\in \mathcal{I}_{f^n}$. Then we
	distinguish four cases:
	\begin{enumerate}
		\item[(I)] $u = b_1, f^n( b_1) = f^n( b_2), b_2 = v;$
		\item[(II)] $u = b_1, f^n( b_1) = f^n( b_2), (b_2 , v)\in \im f^{n}\times \im f^{n};$
		\item[(III)] $(u , b_1)\in \im f^{n}\times \im f^{n}, f^n( b_1) = f^n( b_2), b_2 = v;$
		\item[(IV)] $(u , b_1)\in \im f^{n}\times \im f^{n}, f^n( b_1) = f^n( b_2), (b_2 , v)\in \im f^{n}\times \im f^{n}.$
	\end{enumerate}
	As you can see the discussion below in all cases it implies $(x,y)\in \mathcal{I}_{f^{n+1}}$. \\
	Case (I). We have $x = f^n(u) = f^n( b_1)=  f^n( b_2)=  f^n(v) = y.$ Therefore $(x,y)\in \Delta_A$. \\
	Case (II). We have $x = f^n(u) = f^n( b_1)=  f^n( b_2)$ and $b_2 =f^n(r), v =f^n(s)$ for some  $r, s\in A$. One has $(x,y) = (f^{2n}(r), f^{2n}(s))\in \im f^{n+1}\times \im f^{n+1}$.\\
	Case (III). It is similar to  Case (II).\\
	Case (IV). We have $ u = f^n(r), v= f^n(s)$ where $r, s\in A$. Then 
	\[(x,y) = (f^{2n}(r), f^{2n}(s))\in \im f^{n+1}\times \im f^{n+1}. \] 
	Thus $\mathcal{I}_{f^n}\subseteq\mathcal{I}_{f^{n+1}}$. The converse inclusion is clear.\\
	(2)$\Rightarrow$(3) 
	Let $n$ be a positive integer and $\mathcal{I}_{f^n}=\mathcal{I}_{f^{n+1}}.$ We will show that $\mathcal{I}_{f^n}\vee\mathcal{K}_{f^n}=A\times A$. Let  $(a , b)\in A\times A$. Then $(f^n(a) , f^n(b))\in\mathcal{I}_{f^n} .$ If $f^n(a) = f^n(b)$ then we have  $(a , b)\in \mathcal{I}_{f^n}\vee\mathcal{K}_{f^n}$ and we are done.	
	If $f^n(a)\ne f^n(b)$ then, by the assumption  $\mathcal{I}_{f^n}=\mathcal{I}_{f^{n+1}},$ we get $f^n(a)= f^{2n}(c)$ and $f^n(b) = f^{2n}(d)$ for some $c, d\in A$. We then  have $(a , f^n(c))\in\mathcal{K}_{f^n}, (f^n(c) , f^n(d))\in\mathcal{I}_{f^n}, (f^n(d) , b)\in \mathcal{K}_{f^n}$ so that $(a , b)\in\mathcal{I}_{f^n}\vee\mathcal{K}_{f^n}$  by \cite[Proposition I.4.3]{KKM}. Therefore, $\mathcal{I}_{f^n}\vee\mathcal{K}_{f^n}=A\times A$.
\end{proof}


Given a monoid $S$ and an element $s\in S$, we define $S$-homomorphism $\la_s : S\arrow S$ by $\la_s (x) := sx$, and  put
\begin{displaymath}
	r(s)= \mathcal{K}_{\la_s} = \{ (x , y)\in S\times S : sx = sy\}.
\end{displaymath} 
Then we have
\begin{proposition}\label{st. Hopfianity of $S_S}
	For a monoid $S$, one has:\\
	$(1)$ $S_S$ is  strongly Hopfian if and only if for every $s\in S$ there exists $n\in \mathbb{N}$ such that $r(s^n) = r(s^{n+1}).$\\
	$(2)$ $S_S$ is  strongly co-Hopfian if and only if for every $s\in S$ there exists $n\in \mathbb{N}$ and $t\in S$ such that $s^n = s^{n+1}t$.
\end{proposition}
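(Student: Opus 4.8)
The plan is to apply the endomorphism characterizations obtained in Propositions~\ref{strongly Hopfian} and~\ref{strongly co-Hopfian} to the specific family of endomorphisms $\lambda_s$ of the $S$-act $S_S$. The key observation making both parts tractable is that $\lambda_{s^k} = (\lambda_s)^k$ for every $k \geq 1$, since $(\lambda_s)^k(x) = s^k x = \lambda_{s^k}(x)$; consequently $\mathcal{K}_{(\lambda_s)^k} = \mathcal{K}_{\lambda_{s^k}} = r(s^k)$ and $\im (\lambda_s)^k = s^k S$. Conversely, every endomorphism of $S_S$ is of the form $\lambda$ followed by right multiplication, but more simply: a homomorphism $f \colon S \to S$ of the right $S$-act $S_S$ is determined by $f(1)$, and $f(x) = f(1\cdot x) = f(1)x$, so $f = \lambda_{f(1)}$. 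Hence $\en_S(S_S) = \{\lambda_s : s \in S\}$, and ranging over all endomorphisms is the same as ranging over all $s \in S$. This is the crux of the whole proposition.

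For part~(1): by Proposition~\ref{strongly Hopfian}, $S_S$ is strongly Hopfian iff for every $f \in \en_S(S_S)$ there is $n \geq 1$ with $\mathcal{K}_{f^n} = \mathcal{K}_{f^{n+1}}$. By the identification $f = \lambda_s$ and the computation above, this reads: for every $s \in S$ there is $n \geq 1$ with $r(s^n) = r(s^{n+1})$. So part~(1) follows immediately once the two facts $\en_S(S_S) = \{\lambda_s\}$ and $\mathcal{K}_{(\lambda_s)^k} = r(s^k)$ are in place.

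For part~(2): by Proposition~\ref{strongly co-Hopfian}, $S_S$ is strongly co-Hopfian iff for every $s \in S$ there is $n \geq 1$ with $\mathcal{I}_{(\lambda_s)^n} = \mathcal{I}_{(\lambda_s)^{n+1}}$, equivalently (as shown inside the proof of that proposition) with $\im (\lambda_s)^n = \im (\lambda_s)^{n+1}$, i.e. $s^n S = s^{n+1} S$. Now I would translate this right-ideal equality into the element statement. One direction is trivial: $s^{n+1}t = s^n$ forces $s^n \in s^{n+1}S \subseteq s^n S$, and combined with $s^{n+1}S \subseteq s^n S$ (always true) plus $s^n S = s^n 1 \in s^{n+1}S \cdot$ (using $s^n = s^{n+1}t$ again to push any $s^n u = s^{n+1} t u \in s^{n+1}S$) one gets $s^n S = s^{n+1}S$. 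For the other direction, $s^n S = s^{n+1}S$ gives in particular $s^n = s^n \cdot 1 \in s^n S = s^{n+1}S$, so $s^n = s^{n+1} t$ for some $t \in S$. Thus the right-ideal condition and the element condition are literally equivalent, and part~(2) follows.

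The main (and only real) obstacle is verifying that every $S$-endomorphism of $S_S$ has the form $\lambda_s$ — but this is the standard fact that $\en_S(S_S) \cong S$ as a monoid, via $f \mapsto f(1)$, which is immediate from $f(x) = f(1)x$. Everything else is routine bookkeeping with the identities $(\lambda_s)^k = \lambda_{s^k}$, $\mathcal{K}_{\lambda_{s^k}} = r(s^k)$, and $\im \lambda_{s^k} = s^k S$, together with the already-proved equivalence ``$\mathcal{I}_{f^n}=\mathcal{I}_{f^{n+1}} \iff \im f^n = \im f^{n+1}$'' from Proposition~\ref{strongly co-Hopfian}.
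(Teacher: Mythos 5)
Your proposal is correct and follows essentially the same route as the paper: identify $\en_S(S_S)=\{\la_s : s\in S\}$ via $f\mapsto f(1)$, use $(\la_s)^k=\la_{s^k}$ so that $\mathcal{K}_{(\la_s)^k}=r(s^k)$ and $\im(\la_s)^k=s^kS$, and then translate the criteria of Propositions~\ref{strongly Hopfian} and~\ref{strongly co-Hopfian}, with the right-ideal equality $s^nS=s^{n+1}S$ rewritten as $s^n=s^{n+1}t$. No substantive differences from the paper's argument.
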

\begin{proof}
	$(1)$ First note that any $S$-homomorphism $f: S_S\to S_S$ is completely determined by $f(1)$. In fact, let us take $f(1) = a\in S$, then for any $s\in S$ we have $f(s) = f(1)s = as = \la_a(s)$ so that $f= \la_a$. Therefore, 
	$$\en_S(S_S) =\{\la_a\mid a\in S\}.$$ We now have:\\
	$S_S$ is  a  strongly  Hopfian  act if and only if for all $f\in \en_S(S_S) =\{\la_a\mid a\in S\},$ there  exists $n\in\mathbb{N}$ such that $\mathcal{K}_{f^n}=\mathcal{K}_{f^{n+1}}$ if and only if  there  exists $n\in\mathbb{N}$ such that $\mathcal{K}_{\la^{n}_a}=\mathcal{K}_{\la^{n+1}_a}$  if and only if there exists $n\in\mathbb{N}$ such that $\mathcal{K}_{\la_{a^n}}=\mathcal{K}_{\la_{a^{n+1}}}$  if and only if for every $s\in S$ there exists $n\in \mathbb{N}$ such that $r(s^n) = r(s^{n+1}).$ \\
	$(2)$ Similar to (1),  $S_S$ is  a  strongly  co-Hopfian  act if and only if for all $f\in \en_S(S_S) =\{\la_a\mid a\in S\},$ there  exists $n\in\mathbb{N}$ such that $\ci_{f^n} =\ci_{f^{n+1}}$ if and only if for all $f\in \en_S(S_S) =\{\la_a\mid a\in S\},$ there  exists $n\in\mathbb{N}$ such that $\im {f^n} =\im {f^{n+1}}$ if and only if  for all $a\in S$
	 there  exists $n\in\mathbb{N}$ such that $\im \la_{a^n}  =\im \la_{a^{n+1}}$  if and only if  	for all $a\in S$ there  exists $n\in\mathbb{N}$ such that  $a^nS  =a^{n+1}S$  if and only if 
			for all $a\in S$ there  exists $n\in\mathbb{N}$ and  $b\in S$ such that $a^n  =a^{n+1}b.$
\end{proof}	
	So far, it has not been established whether the class of strongly Hopfian $S$-acts forms a proper subclass of Hopfian 	$S$-acts. In the following example, we demonstrate that it indeed is a proper subclass.
	\begin{example}\label{not st. Hopfian}
{\rm		To give an example of an  $S$-act which is Hopfian but not strongly Hopfian, first, notice that it is easy to see that any commutative monoid as an $S$-act over itself is Hopfian. In fact, if  $S$ is such a monoid and let $f$ be a surjective endomorphism of $S_S$, then there exist an element $s\in S$ such that $f(1)s = f(s) =1.$ If $x, y\in S$ such that $f(x) = f(y)$, then $f(1) x = f(1)y$. By multiplying $s$ to this equation on the right we get $x = y$, because $S$ is commutative. Taking into account this fact, for any prime number $p$, let us now put $S =\prod_{n\geq 1}^{}\ib/p^n\ib$ with the usual multipication. Let $x = (x_n)$ with $x_n = p + p^n\ib$. For every $k\geq 1$, let $y = (y_n$) where  $y_n = 0+ p^n\ib$ if $n=k + 1$ and $y_{k+1} = 1+ p^{k+1}\ib$, then we have $(y, 0)\in r(x^{k+1})$ but $(y, 0)\not\in r(x^{k})$. Thus by proposition \ref{st. Hopfianity of $S_S}, the $S_S$ is not strongly Hopfian act.}
	\end{example}	
\begin{remark}
	{\rm Subacts  of strongly Hopfian (respectively strongly co-Hopfian) $S$-acts need not to be
		strongly Hopfian (respectively strongly co-Hopfian) as the following example show:}
\end{remark}
\begin{example}
	$\mathbb{Q}$ as an $\mathbb{Z}$-act is strongly co-Hopfian
	but its subact $\mathbb{Z}$ is not even co-Hopfian. To see that,
	take the map $f: \mathbb{Z}\rightarrow \mathbb{Z}$ given by $f(n)=2n$,
	for every $n\in \mathbb{Z}$. This homomorphism is injective but is not surjective.
\end{example}

\begin{theorem}
	Every proper retract of a strongly Hopfian $S$-act is strongly Hopfian.
\end{theorem}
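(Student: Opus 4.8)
The plan is to reduce the chain condition on $\en_S(B)$ to the one we already know holds on $\en_S(A)$, by lifting each endomorphism of the retract to an endomorphism of the ambient act. Write the retraction data as a pair of $S$-homomorphisms $\iota\colon B\to A$ and $\rho\colon A\to B$ with $\rho\iota=\id_B$ (so that, up to isomorphism, $B$ is the subact $\iota(B)\subseteq A$ and $e:=\iota\rho$ is an idempotent endomorphism of $A$ with image $B$). Note that $\rho\iota=\id_B$ forces $\iota$ to be injective. The hypothesis that the retract is \emph{proper}, i.e. $\iota(B)\subsetneq A$, will in fact play no role in the argument.

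First I would fix an arbitrary $g\in\en_S(B)$ and set $f:=\iota\,g\,\rho\in\en_S(A)$. A one–line induction using $\rho\iota=\id_B$ shows $f^n=\iota\,g^n\,\rho$ for every $n\geq 1$: indeed $f^{n+1}=(\iota g^n\rho)(\iota g\rho)=\iota g^n(\rho\iota)g\rho=\iota g^{n+1}\rho$. Next I would compare kernel congruences. Since $\iota$ is injective and $\rho\iota=\id_B$, for $b,b'\in B$ one has $g^n(b)=g^n(b')$ if and only if $\iota g^n\rho(\iota(b))=\iota g^n\rho(\iota(b'))$, that is, if and only if $(\iota(b),\iota(b'))\in\mathcal{K}_{f^n}$. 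Hence $(\iota\times\iota)^{-1}(\mathcal{K}_{f^n})=\mathcal{K}_{g^n}$ for every $n\geq 1$, so the order-preserving map $\Theta\mapsto(\iota\times\iota)^{-1}(\Theta)$ sends the ascending chain $\mathcal{K}_f\subseteq\mathcal{K}_{f^2}\subseteq\cdots$ onto the ascending chain $\mathcal{K}_g\subseteq\mathcal{K}_{g^2}\subseteq\cdots$ and carries equalities to equalities.

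Finally, since $A$ is strongly Hopfian, Proposition~\ref{strongly Hopfian} yields an integer $n\geq 1$ with $\mathcal{K}_{f^n}=\mathcal{K}_{f^{n+1}}$; applying $(\iota\times\iota)^{-1}$ gives $\mathcal{K}_{g^n}=\mathcal{K}_{g^{n+1}}$, and the equivalence of conditions $(1)$ and $(2)$ in Proposition~\ref{strongly Hopfian} then shows that $B$ is strongly Hopfian. There is no genuine obstacle here: the whole weight of the proof rests on the identity $f^n=\iota g^n\rho$ together with the injectivity of $\iota$, which is exactly what makes the translation between the two kernel chains faithful; the rest is bookkeeping.
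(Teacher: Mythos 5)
Your proof is correct and follows essentially the same route as the paper: lift the endomorphism $g$ of the retract to $f=\iota\,g\,\rho$ on the ambient strongly Hopfian act, observe $f^n=\iota\,g^n\,\rho$, and transfer the stabilized kernel chain back to the retract. The only cosmetic difference is that you pull the congruences back along the injective section $\iota$, whereas the paper lifts pairs through the surjective retraction and then applies injectivity of the section; your remark that the properness hypothesis is never used is also accurate.
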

\begin{proof}
	Assume that  $A$ is a proper retract of a strongly Hopfian $S$-act  $B,$  i.e., we have
	$S$-homomorphisms $\gamma : A\rightarrow B$ and $\pi : B \rightarrow A$ such that $\pi\gamma =\id_A$ and $\gamma$ non-bijective.
	Let $f$ be an endomorphism of $A$. In the following diagram 
	\[
	\xymatrix{A\ar@<0.5ex>[r]^{\gamma}\ar[d]_f & B\ar@<0.5ex>[l]^{\pi}\ar[d]^g\\
		A\ar@<0.5ex>[r]^{\gamma} & B\ar@<0.5ex>[l]^{\pi}}
	\]
	define $g : B \rightarrow B$ by the rule $g:=\gamma f\pi$, which  is an
	endomorphism of strongly Hopfian $S$-act $B$ and so there exists
	$n\in \mathbb{N}$ such that $\mathcal{K}_{g^{n}}=\mathcal{K}_{g^{n+1}}$. We show
	that $\mathcal{K}_{f^n}=\mathcal{K}_{f^{n+1}}$. It is sufficient to show that $\mathcal{K}_{f^{n+1}}\sbs\mathcal{K}_{f^n}$. 

		Let $(x,y)\in\mathcal{K}_{f^{n+1}}$, so $f^{n+1}(x) = f^{n+1}(y).$ As $\pi : B \to A$ is a surjective 
	homomorphism (because of the existence of right inverse),
	there exists $(a,b)\in B\times B$ such that $\pi(a) = x, \pi (b) = y$ and so
	\[\begin{array}{rcl}
		f^{n+1}(\pi (a))=f^{n+1}(\pi (b))  & \Rightarrow &\gamma f^{n+1}(\pi (a))=\gamma f^{n+1}(\pi (b))\\
		& \Rightarrow & g^{n+1}(a)=g^{n+1}(b).
	\end{array}\]
	Hence $(a,b)\in\mathcal{K}_{g^{n+1}}=\mathcal{K}_{g^n}$ therefore $\gamma f^{n}(\pi (a))=\gamma f^{n}(\pi
	(b))$. Since $\gamma$ is injective we obtain $f^{n}(x)=f^{n}(y)$ and so
	$(x,y)\in\mathcal{K}_{f^n}$. Thus $\mathcal{K}_{f^{n+1}}=\mathcal{K}_{f^n}$ and  $A$ is strongly Hopfian.
\end{proof}

\begin{theorem}
		Let  $A$ be a strongly co-Hopfian act and $h:A\to B$ a surjective morphism such that every endomorphism of $B$ is induced by an endomorphism of $A$
		(e.g. $h$ has a section). Then $B$ is strongly co-Hopfian.
	\end{theorem}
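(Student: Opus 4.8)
The plan is to reduce the statement to the image-stabilization criterion for strongly co-Hopfian acts, and then transport stabilization from $A$ to $B$ along $h$. By Proposition~\ref{strongly co-Hopfian} it suffices to produce, for each $g\in\en_S(B)$, an integer $n\geq 1$ with $\ci_{g^n}=\ci_{g^{n+1}}$; and since $\ci_{g^n}=(\im g^n\times\im g^n)\cup\Delta_B$, this equality is equivalent to $\im g^n=\im g^{n+1}$. So the goal becomes: for every $g\in\en_S(B)$ there is $n\geq 1$ with $\im g^n=\im g^{n+1}$.

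First I would unwind the hypothesis. Saying that $g$ is \emph{induced by} an endomorphism of $A$ means there is $f\in\en_S(A)$ with $h\circ f=g\circ h$; in the special case that $h$ admits a section $\sigma\colon B\to A$ (i.e. $h\sigma=\id_B$) one may simply take $f:=\sigma g h$, since then $hf=h\sigma gh=gh$. An immediate induction on $k$ gives $h\circ f^{k}=g^{k}\circ h$ for all $k\geq 1$.

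Next, since $A$ is strongly co-Hopfian, Proposition~\ref{strongly co-Hopfian} applied to $f$ yields $n\geq 1$ with $\ci_{f^n}=\ci_{f^{n+1}}$, hence $\im f^{n}=\im f^{n+1}$. I then claim $\im g^{n}=\im g^{n+1}$. The inclusion $\im g^{n+1}\sbs\im g^{n}$ is automatic. For the reverse, take $y\in\im g^{n}$, say $y=g^{n}(z)$ with $z\in B$; using surjectivity of $h$ write $z=h(a)$ with $a\in A$, so that $y=g^{n}(h(a))=h(f^{n}(a))$ by the intertwining relation. Since $f^{n}(a)\in\im f^{n}=\im f^{n+1}$, there is $a'\in A$ with $f^{n}(a)=f^{n+1}(a')$, and therefore $y=h(f^{n+1}(a'))=g^{n+1}(h(a'))\in\im g^{n+1}$. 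Thus $\im g^{n}=\im g^{n+1}$, so $\ci_{g^n}=\ci_{g^{n+1}}$, and Proposition~\ref{strongly co-Hopfian} gives that $B$ is strongly co-Hopfian.

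There is no serious obstacle here: the proof is a short diagram chase. The only points requiring care are the correct reading of ``induced by'' — namely the commuting square $hf=gh$, with no uniqueness of $f$ needed or assumed — and the systematic use of surjectivity of $h$ to lift elements of $B$ back to $A$ at each stage, which is exactly what makes the image chain in $B$ inherit stabilization from the one in $A$. The parenthetical ``e.g. $h$ has a section'' should be recorded explicitly as the case $f=\sigma g h$.
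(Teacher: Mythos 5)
Your proof is correct and follows essentially the same route as the paper: extract $f\in\en_S(A)$ with $hf=gh$, use strong co-Hopfianity of $A$ to stabilize the chain for $f$, and push the stabilization forward to $B$ via surjectivity of $h$ and the intertwining relation $hf^k=g^kh$. The only cosmetic difference is that you phrase the argument in terms of the image chains $\im g^n$ rather than the congruences $\mathcal{I}_{g^n}$ with a case split on the diagonal, which is an equivalent reformulation the paper itself uses in Proposition~\ref{strongly co-Hopfian}.
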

	\begin{proof}
		Let $f$ be an endomorphism of $B$. By the hypothesis, there exists an endomorphism $g$ of $A$ such that the diagram 
		$$
		\xymatrix{A_S\ar[r]^{h}\ar[d]_{g} &B_S\ar[d]^{f}\\A_S\ar[r]_{h}&B_S}
		$$
		commutes. Likewise, we have $\mathcal{I}_{g^n}=\mathcal{I}_{g^{n+1}}$ for some $n\in \mathbb N$. We will show that $\mathcal{I}_{f^n}=\mathcal{I}_{f^{n+1}}$. 
		Let  $(b_1,b_2)\in \mathcal{I}_{f^n}$ and $b_1\ne b_2$. Then $(b_1,b_2)\in\im f^n\times \im f^n$. There exist $u_1, u_2\in B$ such that $b_1 = f^n(u_1)$ and $b_2 = f^n(u_2)$. Also, there exist $a_1, a_2\in A$ such that $u_1 = h(a_1)$ and $u_2 = h(a_2)$.   As, $fh = hg$ we have $f^mh = hg^m$ for all $m\geq 1$. Then, $b_1 = hg^n(a_1)$ and $b_2 = hg^n(a_2)$. As, $\mathcal{I}_{g^n}=\mathcal{I}_{g^{n+1}}$ we have $g^n(a_1) = g^n(a_2)$ or there exist $x, y\in A$ such that $g^n(a_1) = g^{n+1}(x)$ and $g^n(a_2) = g^{n+1}(y)$. In the former case we get $b_1 = b_2$ and in the latter case we have $b_1 = hg^n(a_1) = hg^{n+1}(x) = f^{n+1}h(x)$ and $b_2 = hg^n(a_2) = hg^{n+1}(y) = f^{n+1}h(y)$. Then, $(b_1,b_2)\in 
		\im f^{n+1}\times \im f^{n+1}$. Therefore, $\mathcal{I}_{f^n}\subseteq\mathcal{I}_{f^{n+1}}$ and we have done.
	\end{proof}

\begin{theorem}\label{fully invariant}
	If $B$ is fully invariant subact of an $S$-act $A$ and $B, A/B$ are strongly Hopfian then so is $A$.
\end{theorem}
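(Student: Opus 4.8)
The plan is to reduce, via Proposition~\ref{strongly Hopfian}, to producing for each $f\in\en_S(A)$ a single integer $N$ with $\mathcal{K}_{f^N}=\mathcal{K}_{f^{N+1}}$. First I would use full invariance twice: since $f(B)\sbs B$, the restriction $g:=f|_B$ is an endomorphism of $B$ with $g^m=f^m|_B$ for all $m$; and $f$ respects the Rees congruence $\rho_B$ (if $a,a'\in B$ then $f(a),f(a')\in B$), so it induces an endomorphism $\bar f$ of the Rees factor $A/B$ via $\bar f([a])=[f(a)]$, compatible with the canonical epimorphism $\pi\colon A\to A/B$ in the sense that $\pi f^m=\bar f^m\pi$, and hence $\bar f^m([a])=[f^m(a)]$. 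Because $B$ and $A/B$ are strongly Hopfian, there are $p,q\in\mathbb N$ with $\mathcal{K}_{g^m}=\mathcal{K}_{g^p}$ for all $m\ge p$ and $\mathcal{K}_{\bar f^m}=\mathcal{K}_{\bar f^q}$ for all $m\ge q$. I claim $N:=p+q$ works.

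The inclusion $\mathcal{K}_{f^N}\sbs\mathcal{K}_{f^{N+1}}$ is automatic, so only $\mathcal{K}_{f^{N+1}}\sbs\mathcal{K}_{f^N}$ needs proof. Take $(a,a')\in\mathcal{K}_{f^{N+1}}$, i.e.\ $f^{N+1}(a)=f^{N+1}(a')$. Applying $\pi$ gives $([a],[a'])\in\mathcal{K}_{\bar f^{N+1}}=\mathcal{K}_{\bar f^q}$ (since $N+1\ge q$), so $[f^q(a)]=[f^q(a')]$ in $A/B$; by the definition of $\rho_B$ either $f^q(a)=f^q(a')$, or both $f^q(a),f^q(a')\in B$. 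In the first case, applying $f^p$ yields $f^N(a)=f^N(a')$ at once. In the second case, put $b:=f^q(a)$ and $b':=f^q(a')$, both in $B$; then $f^{N+1}(a)=f^{N+1}(a')$ reads $g^{p+1}(b)=g^{p+1}(b')$, so $(b,b')\in\mathcal{K}_{g^{p+1}}=\mathcal{K}_{g^p}$, whence $f^N(a)=f^p(f^q(a))=g^p(b)=g^p(b')=f^p(f^q(a'))=f^N(a')$ again. Thus $\mathcal{K}_{f^N}=\mathcal{K}_{f^{N+1}}$, and Proposition~\ref{strongly Hopfian} concludes that $A$ is strongly Hopfian.

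The only step requiring care is the second case above: one must recognize that the correct ``breakpoint'' for passing to $A/B$ is the $q$-th power rather than the $N$-th, so that after landing inside $B$ there remains exactly an $f^p$ to apply and the stabilization $\mathcal{K}_{g^{p+1}}=\mathcal{K}_{g^p}$ becomes available — this is precisely what forces the choice $N=p+q$. Everything else (well-definedness of $\bar f$, the identity $\pi f^m=\bar f^m\pi$, and $g^m=f^m|_B$) is routine and rests on full invariance of $B$ in an essential but straightforward way; I do not anticipate any genuine difficulty beyond this index bookkeeping.
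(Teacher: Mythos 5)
Your proposal is correct and follows essentially the same route as the paper: restrict $f$ to the fully invariant subact $B$, pass to the induced endomorphism on the Rees factor $A/B$, and split into the two cases dictated by the Rees congruence ($f$-powers either already agree or both land in $B$, where the stabilization for $g=f|_B$ finishes the job). The only difference is bookkeeping — you stabilize at $N=p+q$ while the paper uses $2k$ with $k=\max\{m,n\}$ — which does not change the argument.
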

\begin{proof}
	Recall that $\rho_{B}$ is the Rees congruence on $A$ associated to the subact $B$ of $A$. Let $f$ be an endomorphism of $A$. Consider the induced homomorphism
	$\bar{f}: A/\rho_{B}\rightarrow A/\rho_{B}$ given by
	$\bar{f}([a]_{\rho_{B}})=[f(a)]_{\rho_{B}}$ which is well defined because  $B$ being fully invariant. As $\bar{f}\in \en_S (A/B)$ 
	there exists $n\in \mathbb{N}$ such that
	$\mathcal{K}_{\bar{f}^n}=\mathcal{K}_{\bar{f}^{n+1}}$. Define $g:=f|_B$ which  is an endomorphism of $B,$ thus there exists
	$m\in \mathbb{N}$ such that $\mathcal{K}_{g^m}=\mathcal{K}_{g^{m+1}}$. Let
	$k=\max\{m,n\}$, we show that $\mathcal{K}_{f^{2k}}=\mathcal{K}_{f^{2k+1}}$. Given
	$(x,y)\in\mathcal{K}_{f^{2k+1}}$ so $f^{2k+1}(x)=f^{2k+1}(y)$. This implies that $([x]_{\rho_{B}},[y]_{\rho_{B}})\in
	\mathcal{K}_{\bar{f}^{2k+1}}=\mathcal{K}_{\bar{f}^{k}}$ so 	$\bar{f}^{k}([x]_{\rho_{B}})=\bar{f}^{k}([y]_{\rho_{B}})$
	or ${[f^{k}(x)]}_{\rho_{B}}={[f^{k}(x)]}_{\rho_{B}}.$ We distinguish 	two cases. First, $f^{k}(x)=f^{k}(y)$ then  $(x,y)\in \mathcal{K}_{f^k}$ and since   $\mathcal{K}_{f^k}\subseteq \mathcal{K}_{f^{2k}}$ hence $(x,y)\in \mathcal{K}_{f^{2k}}$. Second, $\{f^{k}(x),f^{k}(y)\}\subseteq B$. As  $B$ being fully invariant we have $f^{k+1}(f^{k}(x))\in B$
	hence $g^{k+1}(f^{k}(x))\in B$. In the same vein, 	$f^{k+1}(f^{k}(y))\in B$ and so $g^{k+1}(f^{k}(y))\in B$. Now, by
	assumption that $(x, y)\in \mathcal{K}_{f^{2k+1}}$, we conclude that $f^{k+1}(f^{k}(x))=f^{k+1}(f^{k}(y))$ therefore
	$g^{k+1}(f^{k}(x))=g^{k+1}(f^{k}(y))$ or $(f^{k}(x),f^{k}(y))\in\mathcal{K}_{g^{k+1}}=\mathcal{K}_{g^{k}}$. Thus, $g^{k}(f^{k}(x))=g^{k}(f^{k}(y)),$  as $g=f|_B$ we	have $f^{k}(f^{k}(x))=f^{k}(f^{k}(y))$ i.e $(x,y)\in\mathcal{K}_{f^{2k}}$.
	In all cases we have proved that $\mathcal{K}_{f^{2k+1}}\subseteq\mathcal{K}_{f^{2k}}$.
	So $A$ is strongly Hopfian.
\end{proof}
\section{Strongly Hopfian and strongly co-Hopfian quasi-injective and quasi	projective $S$-acts}
Hopfian objects and co-Hopfian objects have an elementary interaction with projective objects and injective objects. 
In fact the two results are:
\begin{proposition}
	An injective Hopfian $S$-act is co-Hopfian. Also, a projective 	co-Hopfian $S$-act is Hopfian.
\end{proposition}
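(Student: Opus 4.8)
The plan is to exploit the splitting that injectivity and projectivity confer on one-sided invertible endomorphisms, and then use the Hopfian / co-Hopfian hypothesis to promote a one-sided inverse to a genuine inverse. Both halves are dual, and both mirror the classical module-theoretic argument.

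First I would treat the injective Hopfian case. Let $A$ be an injective $S$-act which is Hopfian, and let $f\in\en_S(A)$ be injective. Since injective homomorphisms are precisely the monomorphisms in \textbf{Act}-$S$, the map $f\colon A\to A$ is a monomorphism; applying the injectivity of $A$ to the monomorphism $f$ together with the morphism $\id_A\colon A\to A$ yields $g\colon A\to A$ with $gf=\id_A$. Then $g$ is a split epimorphism, hence surjective, so by Hopfianity $g$ is an automorphism. From $gf=\id_A$ with $g$ bijective we get $f=g^{-1}$, so $f$ is an automorphism, in particular surjective. Thus every injective endomorphism of $A$ is onto, i.e., $A$ is co-Hopfian.

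The projective co-Hopfian case is entirely dual. Let $A$ be projective and co-Hopfian, and let $f\in\en_S(A)$ be surjective. Then $f\colon A\to A$ is a (surjective, hence the relevant kind of) epimorphism, so projectivity of $A$ applied to $f$ and $\id_A\colon A\to A$ gives $g\colon A\to A$ with $fg=\id_A$. Now $g$ is a split monomorphism, hence injective, so co-Hopfianity makes $g$ an automorphism; then $f=g^{-1}$ is an automorphism, in particular injective, and $A$ is Hopfian. The only steps requiring any attention — all routine in \textbf{Act}-$S$ — are the identification of monomorphisms with injective maps, the fact that surjective homomorphisms are the epimorphisms against which projective acts lift, and the trivial observation that $gf=\id$ (resp.\ $fg=\id$) with $g$ bijective forces $f=g^{-1}$. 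I do not expect a genuine obstacle here: each direction is a single splitting step followed by one application of the hypothesis.
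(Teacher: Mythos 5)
Your proof is correct and follows essentially the same route as the paper: use injectivity (resp.\ projectivity) of $A$ to split the injective (resp.\ surjective) endomorphism $f$, obtaining $g$ with $gf=\id_A$ (resp.\ $fg=\id_A$), then apply the Hopfian (resp.\ co-Hopfian) hypothesis to $g$ and conclude $f=g^{-1}$. No gaps; the paper likewise treats the second statement as the dual of the first.
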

\begin{proof}
	Let $A$ be an injective Hopfian $S$-act and $f: A\to A$ be an injective endomorphism. By injectivity of $A$, 
	$$\xymatrix{A\ar[d]_{\id_A}\ar[r]^{f} &A\ar[dl]^{g}\\ A &}$$
	$f$ factor through the identity map $\id_A$ on $A$, yielding an $S$-map $g: A\rightarrow A$ such that $gf=\id_A$. As a
	result, $g$ is a surjective endomorphism on Hopfian $S$-act $A$. So	is an automorphism. Therefore, $f$ is necessarily the inverse
	automorphism to $g$. So that $A$ is co-Hopfian. The proof can be dualized to prove the second	statement.
\end{proof}
In fact, the above results are true for any category. But, for strongly Hopfian and strongly co-Hopfian $S$-acts, we consider
quasi-injective and quasi-projective $S$-acts and deduce the following results.
\begin{definition}
	{\rm An $S$-act $A$ is called {\it quasi-injective} if for any injective $S$-act map $g$ from $B$ to $A$, and any  $S$-act map
		$f$ from $B$ to $A$, there exists an endomorphism $h$ of $A$ such that $f=hg,$ i.e., the following diagram
		$$\xymatrix{B\ar[d]_{f}\ar[r]^{g} &A\ar[dl]^{h}\\ A &}$$
		commutes.}
\end{definition}
\begin{definition}
	{\rm Let $A$ be an $S$-act. The monoid $(\en_S(A), \circ )$ is called 	{\it strongly $\pi$-regular} if for every $f\in \en_S(A)$ there exists $g\in \en_S(A)$ and an integer $n$ such that $f^{n}=gf^{n+1}=f^{n+1}g$.}
\end{definition}
\begin{example}
	{\rm Consider $\mathbb{Z}$-act $\mathbb{Q}.$ We have already seen that	its endomorphisms	are zero or automorphisms (see Example \ref{Q}). Then $\mathbb{Q}$ is strongly $\pi$-regular since for the first case we take the $S$-act map $g=0$ and for the second case we take the $S$-act map $g$ to be the inverse of $f$.}
\end{example}
\begin{proposition}\label{strongly regular}
	Let $A$ be an $S$-act. If $\en_S(A)$ is strongly $\pi$-regular then $A$ is both strongly Hopfian and strongly co-Hopfian.
\end{proposition}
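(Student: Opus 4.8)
The plan is to deduce both assertions from the single-step stabilization criteria already established in Propositions~\ref{strongly Hopfian} and~\ref{strongly co-Hopfian}, so that no chain has to be analyzed directly. Fix an arbitrary $f\in\en_S(A)$. By strong $\pi$-regularity of $\en_S(A)$, choose $g\in\en_S(A)$ and $n\in\mathbb{N}$ with $f^{n}=gf^{n+1}=f^{n+1}g$. The whole proof then rests on exploiting the two halves of this identity against the two relevant chains: $g$ acting on the left controls kernels, and $g$ acting on the right controls images.

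For strong Hopfianity I would verify $\mathcal{K}_{f^{n}}=\mathcal{K}_{f^{n+1}}$. The inclusion $\mathcal{K}_{f^{n}}\subseteq\mathcal{K}_{f^{n+1}}$ is automatic. Conversely, if $(a,b)\in\mathcal{K}_{f^{n+1}}$, i.e. $f^{n+1}(a)=f^{n+1}(b)$, then applying $g$ and using $f^{n}=gf^{n+1}$ gives $f^{n}(a)=g f^{n+1}(a)=g f^{n+1}(b)=f^{n}(b)$, so $(a,b)\in\mathcal{K}_{f^{n}}$. Hence $\mathcal{K}_{f^{n}}=\mathcal{K}_{f^{n+1}}$, and by Proposition~\ref{strongly Hopfian} (implication $(2)\Rightarrow(1)$) the act $A$ is strongly Hopfian.

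For strong co-Hopfianity I would instead show $\mathcal{I}_{f^{n}}=\mathcal{I}_{f^{n+1}}$; as noted in the proof of Proposition~\ref{strongly co-Hopfian}, it is enough to check $\im f^{n}=\im f^{n+1}$. The inclusion $\im f^{n+1}\subseteq\im f^{n}$ is automatic, and for the reverse inclusion take $y=f^{n}(x)\in\im f^{n}$; using $f^{n}=f^{n+1}g$ we get $y=f^{n+1}\bigl(g(x)\bigr)\in\im f^{n+1}$. Thus $\mathcal{I}_{f^{n}}=\mathcal{I}_{f^{n+1}}$, and Proposition~\ref{strongly co-Hopfian} (implication $(2)\Rightarrow(1)$) yields that $A$ is strongly co-Hopfian.

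There is no substantial obstacle here: the argument is a short two-line computation in each case. The only point requiring care is a purely bookkeeping one — making sure that the left factorization $f^{n}=gf^{n+1}$ is used for the ascending kernel chain and the right factorization $f^{n}=f^{n+1}g$ for the descending image chain — together with the decision to invoke the previously proved equivalences rather than re-deriving stabilization of the full chains from scratch.
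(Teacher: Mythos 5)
Your proposal is correct and follows essentially the same route as the paper: use $f^{n}=gf^{n+1}$ to get $\mathcal{K}_{f^{n+1}}\subseteq\mathcal{K}_{f^{n}}$ and $f^{n}=f^{n+1}g$ to get $\im f^{n}\subseteq\im f^{n+1}$, then invoke Propositions~\ref{strongly Hopfian} and~\ref{strongly co-Hopfian}. No further comments are needed.
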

\begin{proof}
	For every $f\in\en_S(A)$ there exists $g\in\en_S(A)$ and an integer $n$ such that $f^n=gf^{n+1}=f^{n+1}g$ and so
	$\mathcal{K}_{f^n}=\mathcal{K}_{gf^{n+1}}.$ Also, we have
	$\mathcal{K}_{f^{n+1}}\subseteq\mathcal{K}_{gf^{n+1}}$ hence $\mathcal{K}_{f^{n+1}}\subseteq\mathcal{K}_{f^n}$.
	Therefore $A$ is strongly Hopfian (see Proposition~\ref{strongly Hopfian}).
	Again by the assumption we have $\im{f^n}=\im{f^{n+1}g}\subseteq\im{f^{n+1}}$, this means $\im{f^n}=\im{f^{n+1}}$, hence $A$ is strongly co-Hopfian (see Proposition \ref{strongly co-Hopfian}).
\end{proof}
\begin{theorem}\label{quasi-in+st Hopf}
	If $A$ is a quasi-injective strongly Hopfian $S$-act for which the endomorphisms monoid is commutative, then $A$ is strongly co-Hopfian.
\end{theorem}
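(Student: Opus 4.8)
The plan is to fix an arbitrary $f\in\en_S(A)$ and exhibit an integer $n\ge 1$ with $\im f^n=\im f^{n+1}$; since $\mathcal{I}_{f^k}=(\im f^k\times\im f^k)\cup\Delta_A$ is completely determined by $\im f^k$, this forces $\mathcal{I}_{f^n}=\mathcal{I}_{f^{n+1}}$, and then Proposition~\ref{strongly co-Hopfian} (the equivalence of $(1)$ and $(2)$) yields that $A$ is strongly co-Hopfian.

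First I would use that $A$ is strongly Hopfian to pick $n\ge 1$ with $\mathcal{K}_{f^n}=\mathcal{K}_{f^m}$ for all $m\ge n$; in particular $\mathcal{K}_{f^n}=\mathcal{K}_{f^{2n}}$. Abbreviate $g:=f^n$ and set $C:=\im g$ and $C':=g(C)=\im g^2$, so that $C'\sbs C\sbs A$. The crucial preliminary step is to show that the restriction $g|_C\colon C\to C'$ is an isomorphism of $S$-acts. Surjectivity onto $C'$ holds by definition; for injectivity, if $c_1=f^n(a_1)$ and $c_2=f^n(a_2)$ lie in $C$ and $g(c_1)=g(c_2)$, then $f^{2n}(a_1)=f^{2n}(a_2)$, so $(a_1,a_2)\in\mathcal{K}_{f^{2n}}=\mathcal{K}_{f^n}$ and hence $c_1=f^n(a_1)=f^n(a_2)=c_2$. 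Thus $\phi:=(g|_C)^{-1}\colon C'\to C$ is a well-defined $S$-homomorphism.

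Next I would feed the inclusion $\iota\colon C'\hookrightarrow A$ (an injective $S$-map) together with $\phi\colon C'\to C\sbs A$ into the quasi-injectivity of $A$; this produces $h\in\en_S(A)$ with $h\circ\iota=\phi$, i.e. $h|_{C'}=\phi$. Because $g(c)\in C'$ and $\phi(g(c))=c$ for every $c\in C$, we obtain $hg=\id$ on $C=\im g$, and therefore, evaluating at $g(a)\in C$ for arbitrary $a\in A$, $hg^2=g$ on all of $A$; in terms of $f$ this is the identity $h f^{2n}=f^n$ in $\en_S(A)$.

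Finally I would invoke commutativity of $\en_S(A)$: since $h$ and $f$ commute, $f^n=h f^{2n}=f^{2n}h=f^{n+1}\bigl(f^{n-1}h\bigr)$, so $\im f^n\sbs\im f^{n+1}$; the reverse inclusion is automatic, giving $\im f^n=\im f^{n+1}$ as required. The step I expect to be the main obstacle is verifying that $g|_C$ is genuinely bijective — this is exactly where the stabilization $\mathcal{K}_{f^n}=\mathcal{K}_{f^{2n}}$ is consumed, and one must check that the set-theoretic inverse $\phi$ really is an $S$-map before handing it to quasi-injectivity. I would also flag that commutativity is not cosmetic here: from $h f^{2n}=f^n$ alone one cannot place $\im f^n$ inside $\im f^{n+1}$ without sliding the factor $h$ past the powers of $f$.
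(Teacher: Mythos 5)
Your proof is correct and follows essentially the same route as the paper: strong Hopfianness makes a power of $f$ injective on $\im f^n$, quasi-injectivity then supplies a global $h\in\en_S(A)$ with $hf^{k}=f^{n}$ for some $k>n$, and commutativity of $\en_S(A)$ lets you slide $h$ past the powers of $f$ to obtain $\im f^{n}=\im f^{n+1}$. The only cosmetic differences are that the paper extends the inclusion of $\im f^{n}$ along the injective map $f^{n}(x)\mapsto f^{n+1}(x)$ (getting $hf^{n+1}=f^{n}$ and then invoking strong $\pi$-regularity via Proposition~\ref{strongly regular}), whereas you extend the inverse isomorphism along the inclusion of $\im f^{2n}$ and conclude directly through Proposition~\ref{strongly co-Hopfian}.
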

\begin{proof}
	If $f\in \en_S(A)$, then by Proposition~\ref{strongly Hopfian} there exists an
	integer $n$ such that $\mathcal{K}_{f^n}\bigcap \mathcal{I}_{f^n}=\Delta_A.$ We
	define $g: \im {f^n}\rightarrow A$ by $g({f^n}(x)):=f^{n+1}(x)$ for every
	$x\in A$. If $(f^{n}(x),f^{n}(y))\in\mathcal{K}_g$ then we have
	$g(f^{n}(x))=g(f^{n}(y))$ hence $f^{n+1}(x)=f^{n+1}(y)$ i.e,
	$(x,y)\in \mathcal{K}_{f^{n+1}}$. Again  applying Proposition~\ref{strongly Hopfian}, we have
	$(x,y)\in\mathcal{K}_{f^n}$ i.e.,  $f^{n}(x)=f^{n}(y)$. Therefore
	$\mathcal{K}_g=\Delta_{\im f^n}$ and so $g$ is injective. Let $\gamma: \im {f^n}\rightarrow A$ be the  inclusion, defined by
	$\gamma(f^{n}(x)):=f^{n}(x).$ Then by quasi-injectivity of $A$,
	there exists an endomorphism $h : A\rightarrow A$ such that $\gamma=hg$, 
	$$\xymatrix{\im f^n\ar[d]_{\gamma}\ar[r]^{g} &A\ar[dl]^{h}\\ A &}$$
	in other words, $f^{n}=h{f^{n+1}}$. Hence $\en_S(A)$ 	is strongly $\pi$-regular. Now, by the previous proposition, $A$ is strongly co-Hopfian.
\end{proof}
\begin{definition}
	{\rm An $S$-act $A$ is called quasi-projective if for any surjective homomorphism $g : A\rightarrow B$ and any homomorphism $f :
		A\rightarrow B$, there exists an endomorphism $h$ of $A$ such that 	$f=gh$ (i.e; there exists $h: A\rightarrow A$ such that the
		following diagram 	$$\xymatrix{& A \ar[d]^{f} \ar[dl]_{h}\\ A\ar@{->>}[r]_{g} & {\rm B}}$$
		commutes.)}
\end{definition}
\begin{theorem}\label{quasi-proj+st co-Hopf}
	If $A$ is a quasi-projective strongly co-Hopfian $S$-act for which the endomorphisms monoid is commutative, then $A$ is strongly Hopfian.
\end{theorem}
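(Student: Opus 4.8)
The plan is to run the exact dual of the argument for Theorem~\ref{quasi-in+st Hopf}: rather than building an injective map out of an image and pushing it along the inclusion via quasi-injectivity, we build a surjection onto an image and lift a map to it via quasi-projectivity, the upshot being that the (commutative) monoid $\en_S(A)$ is strongly $\pi$-regular. Once that is in hand, Proposition~\ref{strongly regular} gives that $A$ is both strongly Hopfian and strongly co-Hopfian, and the first conclusion is what we want.

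Concretely, I would fix $f\in\en_S(A)$. Since $A$ is strongly co-Hopfian, Proposition~\ref{strongly co-Hopfian} furnishes an integer $n\geq 1$ with $\mathcal{I}_{f^n}=\mathcal{I}_{f^{n+1}}$, equivalently $\im f^{n}=\im f^{n+1}$. Put $B:=\im f^{n}=\im f^{n+1}$, which is a subact of $A$. Then the corestriction $f^{n+1}\colon A\to B$ is a surjective $S$-homomorphism, while $f^{n}\colon A\to B$ is an $S$-homomorphism, since its image is precisely $B$. Apply quasi-projectivity of $A$ to this data,
$$\xymatrix{& A \ar[d]^{f^{n}} \ar@{-->}[dl]_{h}\\ A\ar@{->>}[r]_{f^{n+1}} & B,}$$
to obtain $h\in\en_S(A)$ with $f^{n}=f^{n+1}h$ in $\en_S(A)$ (composing the relation in $B$ with the inclusion $B\hookrightarrow A$).

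To finish, I would use that $\en_S(A)$ is commutative: $f^{n+1}h=hf^{n+1}$, so $f^{n}=f^{n+1}h=hf^{n+1}$, which is exactly the condition that $\en_S(A)$ is strongly $\pi$-regular. Proposition~\ref{strongly regular} then yields that $A$ is strongly Hopfian, as required.

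The step that needs care is the legitimacy of the quasi-projectivity square, and this is precisely where the strongly co-Hopfian hypothesis is spent: one needs $\im f^{n}=\im f^{n+1}$ so that $f^{n+1}$ really does corestrict to a \emph{surjection} onto the common image $B$ while $f^{n}$ genuinely maps into $B$. This is the mirror of the role of strong Hopfianity in Theorem~\ref{quasi-in+st Hopf}, where it was needed to make the analogous map injective. After that, commutativity of $\en_S(A)$ is only the bookkeeping needed to upgrade the one-sided identity $f^{n}=f^{n+1}h$ to the two-sided one in the definition of strong $\pi$-regularity; so I do not expect any genuine obstacle beyond correctly setting up the diagram.
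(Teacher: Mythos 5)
Your proof is correct and follows essentially the same route as the paper: use $\im f^{n}=\im f^{n+1}$ to view $f^{n+1}$ as a surjection onto $\im f^{n}$, lift $f^{n}$ through it by quasi-projectivity to obtain $f^{n}=f^{n+1}h$, and conclude strong $\pi$-regularity, whence strong Hopfianity by Proposition~\ref{strongly regular}. If anything, you are more explicit than the paper about where commutativity of $\en_S(A)$ is used to turn the one-sided identity into the two-sided one required by the definition.
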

\begin{proof}
	Given $f\in\en_S(A)$, then by Proposition~\ref{strongly co-Hopfian}, there exists an integer $n$ such that
	$\mathcal{I}_{f^n}=\mathcal{I}_{f^{n+1}}$ and so $\im {f^n}=\im{f^{n+1}}$. We define $g:A\rightarrow \im {f^n}$ by $g(a):=f^{n+1}(a)$ for every $a\in A$,	which is a surjective homomorphism since $\im{f^n}=\im{f^{n+1}}.$ For every $x\in \im {f^n}$ there exists $u\in A$ such that $g(u)=f^{n+1}(u)=x$. Now by quasi-projectivity of $A$, there exists an
	endomorphism $h : A\rightarrow A$ such that the following diagram
	$$\xymatrix{& A \ar[d]^{f^{n}} \ar[dl]_{h}\\ A\ar@{->>}[r]_{g} & {\rm im }f^n}$$
	commutes. In other words, $f^{n+1}h=f^n$. Hence $\en_S(A)$ is strongly
	$\pi$-regular. By  Proposition~\ref{strongly regular}, $A$ is strongly Hopfian.
\end{proof}
\section{Finitely generated strongly Hopfian or strongly co-Hopfian $S$-acts}
Finitely generated acts is of as great importance in the theory of acts as it is in the theory of modules. In analogy to modules category, in this short section we will give two results regarding this class of acts, but let us have a definition first.
\begin{definition}
	{\rm An act $A_S$ is called {\it Fitting}  if it  is both strongly Hopfian and strongly co-Hopfian act.}
\end{definition}
\begin{proposition} 
	Let $A_S$ be a finitely generated $S$-act. Then the following statements are equivalent:\\
	$(1)$ Every factor act $A_S/\rho$ of $A$ is co-Hopfian for every congruence  $\rho$ on  $A$.\\
	$(2)$ Every factor act $A_S/\rho$ of $A$ is strongly co-Hopfian for every congruence  $\rho$ on  $A$. 
\end{proposition}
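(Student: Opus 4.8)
The direction $(2)\Rightarrow(1)$ is immediate, since a strongly co-Hopfian $S$-act is co-Hopfian; the content is $(1)\Rightarrow(2)$.

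Fix a congruence $\rho$ on $A$ and put $B=A/\rho$. Quotients of a finitely generated act are finitely generated, so $B=b_1S\cup\cdots\cup b_mS$ for some $b_1,\dots,b_m\in B$. Let $f\in\en_S(B)$; by Proposition~\ref{strongly co-Hopfian} it suffices to find $N\ge 1$ with $\im f^{N}=\im f^{N+1}$. The plan is to pass to the ``limit kernel'' $K=\bigcup_{n\ge 1}\mathcal{K}_{f^{n}}$, which is again a congruence on $B$ (an ascending union of congruences), and to exploit that $f$ descends to an \emph{injective} endomorphism $\bar f$ of $B/K$: indeed $(a,a')\in\mathcal{K}_{f^{n}}$ forces $(f(a),f(a'))\in\mathcal{K}_{f^{n-1}}\subseteq K$, so $\bar f$ is well defined, while $(f(a),f(a'))\in\mathcal{K}_{f^{n}}$ forces $(a,a')\in\mathcal{K}_{f^{n+1}}\subseteq K$, so $\bar f$ is injective.

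Next I would note that $B/K\cong A/\theta$, where $\theta$ is the pullback of $K$ along the canonical epimorphism $A\to A/\rho$; thus $B/K$ is, up to isomorphism, a factor act of $A$, hence co-Hopfian by $(1)$, so the injective endomorphism $\bar f$ is in fact an automorphism. Now finite generation enters: surjectivity of $\bar f$ gives, for each $i$, an element $c_i\in B$ and an integer $n_i\ge 1$ with $f^{n_i+1}(c_i)=f^{n_i}(b_i)$; setting $N=\max_i n_i$ and applying $f^{\,N-n_i}$ yields $f^{N}(b_i)=f^{N+1}(c_i)\in\im f^{N+1}$ for every $i$. Since every element of $B$ is of the form $b_is$, we obtain $\im f^{N}\subseteq\im f^{N+1}$, hence $\im f^{N}=\im f^{N+1}$; applying $f$ then gives $\im f^{m}=\im f^{N}$ for all $m\ge N$, i.e. the chain $\mathcal{I}_{f}\supseteq\mathcal{I}_{f^{2}}\supseteq\cdots$ stabilizes. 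Thus $B=A/\rho$ is strongly co-Hopfian, and since $\rho$ was arbitrary this establishes $(2)$.

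The verifications that $K$ is a congruence and that $B/K\cong A/\theta$ (the third isomorphism theorem for acts) are routine, as is the equivalence between $\im f^{N}=\im f^{N+1}$ and $\mathcal{I}_{f^{N}}=\mathcal{I}_{f^{N+1}}$. The only substantive step is the combination ``apply co-Hopfianity to the single quotient $B/K$, then use finite generation to descend the resulting surjectivity to a finite stage'', and the mild bookkeeping $N=\max_i n_i$ is the one place I would spell out, since that is precisely where the finite generation of $A$ is needed.
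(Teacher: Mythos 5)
Your proof is correct and follows essentially the same route as the paper: form the congruence $K=\bigcup_{n\ge 1}\mathcal{K}_{f^{n}}$, note that $f$ induces a well-defined injective endomorphism of the quotient by $K$, which is (isomorphic to) a factor act of $A$ and hence co-Hopfian by $(1)$, and then use the finitely many generators and $N=\max_i n_i$ to obtain $\im f^{N}=\im f^{N+1}$. The only difference is cosmetic: the paper first reduces to proving the claim for $A$ itself, whereas you work directly with an arbitrary factor $B=A/\rho$ and make the identification $B/K\cong A/\theta$ explicit.
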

\begin{proof}
	(1)$\Rightarrow$(2) Suppose that $\{ a_1, \cdots , a_n\}$ be a generating set for $A$, i.e., $A_S =\bigcup_{i=1}^{n}a_iS$.  It is sufficient  to show that $A_S$ is  strongly co-Hopfian when $A_S$ is   co-Hopfian. Let   $f\in\en_{S}(A)$ and put $\rho =\bigcup_{k\geq 1}\mathcal{K}_{f^k}$. Then it is easy to see that $\rho$ is a congruence on $A$.  Define the induced map $g$ in the following diagram 
	$$
	\xymatrix{A_S\ar[r]^{\pi}\ar[d]_{f} &A_S/ \rho\ar[d]^{g}\\A_S\ar[r]_{\pi}&A_S/\rho }
	$$ 
	as $g([a]_\rho )= [f(a)]_\rho$. It is well-defined because if  $[a]_\rho = [b]_\rho$ then there exists $k\geq 1$ such that $(a,b)\in \mathcal{K}_{f^k}$. That is $f^k(a) = f^k(b)$. Since $f^{k+1}(a) = f^{k+1}(b)$ by the definition of $\rho$, $(f(a) , f(b))\in \rho$ so $[f(a)]_\rho= [f(b)]_\rho$. Next, it is clear that $g$ is an  $S$-homomorphism. Moreover, $g$ is injective, because $(a,b)\in \rho$ if and only if $(f(a) , f(b))\in \rho$. By co-Hopficity of $A_S/\rho$, $g$ is surjective. Then there exist  $ b_1, \cdots , b_n\in A_S$ such that $g([b_i]_\rho )= [a_i]_\rho$, which is equivalent to $(f(b_i), a_i)\in \rho$. For any $1\leq i\leq n$ there exists $m_i\geq 1$ such that  $(f(b_i), a_i)\in \mathcal{K}_{f^{m_i}}.$ Now take $m=\max\{m_1, \cdots , m_n\}.$ As $f$ is a homomorphism it is easily seen that $\im f^{m+1} =\im f^{m}$. Therefore, $\ci_{f^{m+1}} =\ci_{f^m}$ and hence $A_S$ is strongly co-Hopfian. \\
	(2)$\Rightarrow$(1) This is clear.
\end{proof}
\begin{proposition}
	Let $A_S$ be a finitely generated $S$-act. Then the following statements are equivalent:\\
	$(1)$ Every factor act 	$A_S/\rho$  of $A$ is Hopfian and co-Hopfian.\\
	$(2)$ Every factor act 	$A_S/\rho$ of $A$ is a Fitting act.
\end{proposition}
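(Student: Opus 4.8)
The plan is to dispose of $(2)\Rightarrow(1)$ immediately and to reduce $(1)\Rightarrow(2)$ to proving that $A$ itself is a Fitting act; once that is done, the very same argument applies to an arbitrary factor act $A_S/\rho$, since $A_S/\rho$ is again finitely generated and, by the correspondence between congruences on $A_S/\rho$ and congruences on $A_S$ containing $\rho$, every factor act of $A_S/\rho$ is a factor act of $A_S$ and hence Hopfian and co-Hopfian by hypothesis $(1)$. For $(2)\Rightarrow(1)$ there is nothing to do: a Fitting act is by definition both strongly Hopfian and strongly co-Hopfian, and as already noted strongly Hopfian $\Rightarrow$ Hopfian while strongly co-Hopfian $\Rightarrow$ co-Hopfian, so every Fitting factor act is Hopfian and co-Hopfian.

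For $(1)\Rightarrow(2)$ I would first observe that, since $(1)$ makes every factor act of $A$ co-Hopfian, the preceding Proposition already gives that every factor act of $A$ is strongly co-Hopfian; in particular $A$ is strongly co-Hopfian, and it only remains to see that $A$ is strongly Hopfian. Fix $f\in\en_S(A)$. By Proposition~\ref{strongly co-Hopfian} there is $n\geq 1$ with $\im f^{n}=\im f^{n+1}$. Put $B=\im f^{n}$, a subact of $A$; then $f(B)=\im f^{n+1}=B$, so $f$ restricts to a \emph{surjective} endomorphism $f|_B$ of $B$, and $(f|_B)^{n}=f^{n}|_B$. Since $B$ is isomorphic to the factor act $A/\mathcal{K}_{f^{n}}$ via $[a]_{\mathcal{K}_{f^{n}}}\mapsto f^{n}(a)$, hypothesis $(1)$ makes $B$ Hopfian, whence $f|_B$ is an automorphism and $f^{n}|_B$ is injective.

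To finish I would check that $\ci_{f^{n}}\cap\mathcal{K}_{f^{n}}=\Delta_A$: if $(a,b)$ lies in this intersection with $a\neq b$, then $(a,b)\in\im f^{n}\times\im f^{n}=B\times B$ and $f^{n}(a)=f^{n}(b)$, contradicting injectivity of $f^{n}|_B$. By Proposition~\ref{strongly Hopfian} this makes $A$ strongly Hopfian, so $A$ is Fitting, and the reduction is complete. The routine verifications are the correspondence theorem used in the reduction, finite generation of quotients, the isomorphism $A/\mathcal{K}_{f^{n}}\cong\im f^{n}$, and that $f|_B$ is a genuine endomorphism of $B$ whose $n$-th iterate agrees with $f^{n}$ on $B$; none of these should cause trouble. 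The one load-bearing idea — and the point I expect to be the real crux — is to route the Hopfian half through strong co-Hopficity: pass to the stable image $B=\im f^{n}$ on which $f$ is onto, use Hopficity of the factor act $B$ to force $f|_B$ injective, and then read off $\ci_{f^{n}}\cap\mathcal{K}_{f^{n}}=\Delta_A$ so that characterization $(3)$ of Proposition~\ref{strongly Hopfian} applies.
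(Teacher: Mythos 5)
Your proposal is correct and takes essentially the same route as the paper: reduce $(1)\Rightarrow(2)$ to showing $A$ itself is Fitting, get strong co-Hopficity from the preceding proposition, and then obtain strong Hopficity by using Hopficity of the factor act $A/\mathcal{K}_{f^{n}}\cong\im f^{n}$ on the stable image to force $\mathcal{I}_{f^{n}}\cap\mathcal{K}_{f^{n}}=\Delta_{A}$ and invoking Proposition~\ref{strongly Hopfian}. The only cosmetic difference is that the paper applies Hopficity directly to the surjective endomorphism $y\mapsto f^{n}(y)$ of $\im f^{n}$, whereas you apply it to $f|_{\im f^{n}}$ and then pass to its $n$-th power.
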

\begin{proof}
	(1)$\Rightarrow$(2)
	As in the proof of previous proposition it is enough to see that $A_S$ is a Fitting act. We know by the earlier proposition that $A_S$ is strongly co-Hopfian. Let $f$ be an endomorphism of $A_S$, then 	there exists an integer $m$ such that $\mathcal{I}_{f^m}=\mathcal{I}_{f^{m+1}}$. Consider now $h$ the endomorphism of $\im f^m$ defined by $h(y) = f^m(y),$ this endomorphism is surjective. By Homomorphism Theorem for acts~\cite[Theorem I.4.21]{KKM}, $\im f^m$ is isomorphic to the factor act $A_S/\mathcal{K}_{f^m}$, and in light of the hypothesis it is Hopfian. Therefore, $h$  is injective. This forces $\mathcal{I}_{f^m}\cap \mathcal{K}_{f^m} = \bigtriangleup_{A}$, that is, $A_S$ is strongly Hopfian by Proposition~\ref{strongly Hopfian}, and hence  $A_S$  is a Fitting act.\\
	(2)$\Rightarrow$(1) This is clear.
\end{proof}


\begin{thebibliography}{99}
	\bibitem{AM} K. Ahmadi and A. Madanshekaf.  Dedekind-finite acts over monoids. Comm. Algebra 45: 377-383 (2017).
	\bibitem{An} F. W. Anderson and  K. R. Fuller. Rings and Categories of Modules. Springer, New York (1974).
	\bibitem{B} S. Bulman-Fleming. Regularity and products of idempotents in endomorphism monoids of projective acts. Mathematika, 42:354-367, 1995.
	\bibitem{BS} S. Burris and H. P. Sankappanavar, A Course in	Universal Algebra, Springer, New York  (1981).  
	\bibitem{FS} F. Farsad, F. Sajadian.  {Hopfian and co-Hopfian S-acts}, {The 1st international conference of educational management of Iran} (2017).	
	\bibitem{FK} V. Fleischer and U. Knauer. Right and left PP endomorphism monoids of free right acts. In J. Almeida,
	G. M. S. Gomes, and P. V. Silva, editors, Semigroups, Automata, Languages, pages 101-110. World Scientific, Singapore, 1996.
	\bibitem{G1} P. A. Grillet. Commutative actions, Acta Sci. Math. (Szeged), 73:1-2, (2007), 91-112.
	\bibitem{G2} P. A. Grillet. Irreducible actions, Period. Math. Hungar. 54 (1), 51-76 (2007).
	\bibitem{Hir}  V. A. Hiremath, Hopfian rings and Hopfian modules. Indian J. Pure Appl. Math.
	17:895–-900  (1986).
	\bibitem{H}  J. M. Howie. Fundamentals of Semigroup Theory. Oxford Science Publications, Oxford, (1995).
	\bibitem{KKM}  M. Kilp, U. Knauer and  A.V.  Mikhalev. Monoids, Acts and Categories. de Gruyter, Berlin (2000).
	\bibitem{KK} I. B. Kozhukhov and  K. A. Kolesnikova. On Hopfianity and co-Hopfianity of acts over groups, Fundam. Prikl. Mat., Volume 23, Issue 3, 131–-139, (2020).
	\bibitem{KM} U. Knauer and A. V. Mikhalev. Endomorphism monoids of generators in the category of acts over monoids. In J. M. Howie, W. D. Munn, and H. J. Weinert, editors, Semigroups and Applications, pages 132--151. World Scientific, Singapore, 1992.
	\bibitem{KN} U. Knauer and P. Normak. Hereditary endomorphism monoids of projective acts. Manuscripta math., 70:133--143, 1991.
	\bibitem{MT} A. Madanshekaf and   J. Tavakoli. Tiny objects in the category of $M$-sets. Ital. J. Pure Appl. Math. 10, 153-162 (2001).
		\bibitem{NP} P. Normak. Purity in the category of $M$-sets.  Semigroup Forum, 20(1): 157-170, 1980.
\end{thebibliography}
\end{document}